\newtheorem{thm}{Theorem}[section]
\newtheorem{lem}[thm]{Lemma}
\theoremstyle{definition}
\numberwithin{equation}{section}
\def\imod#1{\allowbreak\mkern5mu({\operator@font mod}\,\,#1)}
\begin{document}

\title[Rogers-Ramanujan type identities for alternating knots]{Rogers-Ramanujan type identities for alternating knots}

\author{Adam Keilthy and Robert Osburn}

\address{25 Turnberry, Baldoyle, Dublin 13, Ireland}

\address{School of Mathematical Sciences, University College Dublin, Belfield, Dublin 4, Ireland}

\email{adam.keilthy@gmail.com}

\email{robert.osburn@ucd.ie}

\subjclass[2010]{Primary: 33D15; Secondary: 05A30, 57M25}
\keywords{$q$-series identities, $q$-series transformations, Bailey pairs, alternating knots}

\date{\today}

\dedicatory{Dedicated to  Wen-Ching Winnie Li on the occasion of her birthday}

\begin{abstract}
We highlight the role of $q$-series techniques in proving identities arising from knot theory. In particular, we prove Rogers-Ramanujan type identities for alternating knots as conjectured by Garoufalidis, L{\^e} and Zagier.
\end{abstract}

\maketitle

\section{Introduction}

Two of the most important results in the theory of $q$-series are the classical Rogers-Ramanujan identities which state that 

\begin{equation} \label{rr}
\sum_{n \geq 0} \frac{q^{n^2 + sn}}{(q)_{n}} = \frac{1}{(q^{1+s}; q^5)_{\infty} (q^{4-s}; q^{5})_{\infty}}
\end{equation}

\noindent where $s=0$ or $1$ and

\begin{equation*}
(a)_n = (a;q)_n = \prod_{k=1}^{n} (1-aq^{k-1}),
\end{equation*}

\noindent valid for $n \in \mathbb{N} \cup \{\infty\}$. In 1974, Andrews \cite{andrews74} obtained a generalization of (\ref{rr}) to odd moduli, namely for all $k \geq 2$, $1 \leq i \leq k$, 

\begin{equation} \label{ag}
\sum_{n_1, n_2, \dotsc, n_{k-1} \geq 0} \frac{q^{N_1^2 + N_2^2 + \cdots + N_{k-1}^2 + N_i + N_{i+1} + \cdots + N_{k-1}}}{(q)_{n_1} (q)_{n_2} \cdots (q)_{n_{k-1}}} = \frac{(q^i; q^{2k+1})_{\infty} (q^{2k+1-i}; q^{2k+1})_{\infty} (q^{2k+1}; q^{2k+1})_{\infty}}{(q)_{\infty}}
\end{equation}

\noindent where $N_{j}=n_j + n_{j+1} + \cdots + n_{k-1}$. There has been recent interest in the appearance of these (and similar) identities in knot theory. For example, Hikami \cite{hik} considered (\ref{rr}) from the perspective of the colored Jones polynomial of torus knots while Armond and Dasbach \cite{ad} gave a skein-theoretic proof of (\ref{ag}). For similar identities related to false theta series, see \cite{mh} and for other connections between $q$-series and quantum invariants of knots, see \cite{bhl}--\cite{gk}, \cite{gt}, \cite{hik1} and \cite{lz}.

In this paper, we consider recent work in \cite{gl} whereby the $q$-multisums $\Phi_{K}(q)$ and $\Phi_{-K}(q)$ were associated to a given alternating knot $K$ and its mirror $-K$. The $q$-multisum $\Phi_{K}(q)$ occurs as the $0$-limit (or ``tail'') of the colored Jones polynomial of $K$ (see Theorem 1.10 in \cite{gl}). In Appendix D of \cite{gl}, Garoufalidis and L{\^e} (with Zagier) conjectured evaluations of $\Phi_{K}(q)$ for 21 knots and of $\Phi_{-K}(q)$ for 22 knots in terms of modular forms and false theta series and state ``every such guess is a $q$-series identity whose proof is unknown to us". Before stating these conjectures, we recall some notation from \cite{gl}. For a positive integer $b$, we define

\begin{equation*}
h_{b}= h_{b}(q) = \sum_{n \in \mathbb{Z}} \epsilon_{b}(n) q^{\frac{bn(n+1)}{2} - n}
\end{equation*}

\noindent where

\begin{equation*}
 \epsilon_{b}(n) = \left\{
  \begin{array}{ll}
    (-1)^n & \text{if $b$ is odd,}\\
    1 & \text{if $b$ is even and $n \geq 0$,} \\
    -1 & \text{if $b$ is even and $n < 0.$} 
  \end{array} \right. 
\end{equation*}

\noindent Note that $h_1(q)=0$, $h_2(q)=1$ and $h_3(q)=(q)_{\infty}$. For an integers $p$, $a$ and $b$, let $K_p$ denote the $p$th twist knot obtained by $-1/p$ surgery on the Whitehead link and $T(a,b)$ the left-handed $(a,b)$ torus knot. The 43 conjectures from \cite{gl} are as follows:

\begin{table}[h]
 \begin{tabular}{|c|c|c|}
    \hline
    $K$ & $\Phi_{K}(q)$ & $\Phi_{-K}(q)$ \\ \hline
    $3_1$ & $h_3$ & 1 \\ 
    $4_1$ & $h_3$ & $h_3$ \\
    $5_1$ & $h_5$ & 1 \\
    $5_2$ & $h_4$ & $h_3$ \\
    $6_1$ & $h_5$ & $h_3$ \\
    $6_2$ & $h_3 h_4$ & $h_3$ \\
    $6_3$ & $h_{3}^2$ & $h_{3}^2$ \\
    $7_1$ & $h_7$ & 1 \\
    $7_2$ & $h_6$ & $h_3$ \\
    $7_3$ & $h_5$ & $h_4$ \\
    $7_4$ & $h_4^2$ & $h_3$ \\
    $7_5$ & $h_3 h_4$ & $h_4$ \\
    $7_6$ & $h_3 h_4$ & $h_3^2$ \\
    $7_7$ & $h_3^3$ & $h_3^2$ \\
    $8_1$ & $h_7$ & $h_3$ \\
    $8_2$ & $h_3 h_6$ & $h_3$ \\
    $8_3$ & $h_5$ & $h_5$ \\
    $8_4$ & $h_3$ & $h_4 h_5$ \\
    $8_5$ & $?$ & $h_3$ \\
    $K_p$, $p>0$ & $h_{2p}$ & $h_3$ \\
    $K_p$, $p<0$ & $h_{2|p|+1}$ & $h_3$ \\
    $T(2,p)$, $p>0$ & $h_{2p+1}$ & 1 \\
     
    \hline
  \end{tabular}
\caption{\label{tbl}}
\end{table}

Here, we have corrected the entries for $6_1$, $7_3$, $8_1$, $8_4$, $8_5$, $K_{p}$, $p<0$ (and their mirrors) and $7_5$ in Appendix D of \cite{gl}. Three of these Rogers-Ramanujan type identities, namely 

\begin{equation} \label{three}
\Phi_{3_1}(q)=h_3, \quad \Phi_{4_1}(q)=h_3 \quad \text{and} \quad \Phi_{6_3}(q)=h_3^2
\end{equation}

\noindent have been proven by Andrews \cite{and}. Motivated by his work (and in conjunction with (\ref{three})), we prove the following result.

\begin{thm} \label{main}
The identities in Table \ref{tbl} are true.
\end{thm}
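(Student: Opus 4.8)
The key observation is that each right-hand side in Table~\ref{tbl} is either $1$, a single $h_b$, or a product of two or three such factors, while the left-hand sides $\Phi_K(q)$ are concrete $q$-multisums produced by the algorithm of \cite{gl}. My strategy is to reduce each identity to one of a small number of ``master'' $q$-series identities, organized by the shape of the multisum. The main tool will be Bailey pairs and the Bailey chain, which is the natural engine for turning nested sums $\sum q^{N_1^2+\cdots}/(q)_{n_1}\cdots(q)_{n_{k-1}}$ into theta-type products, exactly as in Andrews' derivation of \eqref{ag} and his proof of \eqref{three} in \cite{and}.

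First I would assemble the explicit formulas for $\Phi_K(q)$ for each knot in the table from the recipe in \cite{gl} (a reduced $R$-matrix state sum read off from an alternating diagram); this is bookkeeping but must be done carefully, and here I would also re-derive the corrected entries (for $6_1$, $7_3$, $8_1$, $8_4$, $8_5$, $K_p$ with $p<0$ and $7_5$) to make sure the stated right-hand sides are the ones actually matched. Next, I would isolate the ``atomic'' identities: $(i)$ the single-variable family $\sum_{n\ge0} q^{?}\,(\text{stuff})/(q)_n = h_b$, which for torus knots $T(2,p)$ and twist knots $K_p$ should follow from the Rogers--Fine identity or from Andrews' analytic false/partial theta machinery (note $h_{2p}$ is a false theta series and $h_{2p+1}$ a genuine theta quotient, so the even/odd dichotomy in $\epsilon_b$ is precisely the false-vs-true theta split); and $(ii)$ the two-variable identities giving products $h_a h_b$ (e.g. $h_3h_4$ for $6_2,7_5,7_6$ and $h_4^2$ for $7_4$), which should come from iterating a Bailey pair once — one application of the Bailey chain inserts one $h$-factor, so a double sum naturally yields a product of two theta/false-theta series. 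The triple-product cases ($h_3^3$ for $7_7$, and $h_3^2$ appearing for several mirrors) are handled by one further iteration, using $h_3=(q)_\infty$ to absorb a $(q)_\infty$ coming out of the unit Bailey pair.

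The organizing principle is therefore: match the number of summation variables in $\Phi_K(q)$ to the number of Bailey-chain iterations, and match the linear forms in the exponent (the $N_i$'s that appear) to a choice of initial Bailey pair from a short list (the unit pair, and the pairs underlying \eqref{rr} and \eqref{ag}, together with their even-modulus ``false'' analogues from \cite{mh} or Andrews' work on the fifth-order mock theta functions). For the mirror column, many entries are simply $1$ (when $-K$ has an alternating diagram whose state sum telescopes trivially) or $h_3=(q)_\infty$, and these should fall out more or less immediately once the state-sum formula is written down; the genuinely substantive mirror identities are the ones producing $h_4,h_5,h_3^2,h_4h_5$.

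\textbf{Main obstacle.} I expect the hard part to be \emph{not} any single identity but the combinatorial reduction step: showing that each $\Phi_K(q)$ from \cite{gl}, after the standard simplifications (inverting $q\to1/q$ where the tail requires it, shifting summation indices, and collapsing inner sums by the $q$-binomial theorem), actually lands in the exact normal form to which a Bailey-pair identity applies — in particular getting the cross-terms $2N_iN_j$ and the linear terms to line up, and correctly tracking the sign factors $\epsilon_b(n)$ through the transformation when $b$ is even. A secondary difficulty is the entry $\Phi_{8_5}(q)$, marked ``$?$'' in the table: since no conjectural closed form is given, either the theorem's scope must explicitly exclude it, or one must first \emph{produce} the right-hand side (presumably another product of $h_b$'s) and then prove it; I would flag this and, if needed, restrict the statement to the 42 entries with a stated value. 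Finally, uniformity in the parametrized families $K_p$ and $T(2,p)$ requires the Bailey pair to be chosen with $b$ (equivalently $p$) as a free parameter, so I would want a one-parameter Bailey pair whose limit reproduces the whole family at once rather than arguing knot-by-knot.
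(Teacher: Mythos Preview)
Your high-level plan (write out each $\Phi_K(q)$ explicitly, then reduce to a small list of master $q$-series identities) matches the paper's spirit, but there are two concrete gaps that would stall your execution.

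\textbf{First, the combinatorial reduction.} You propose to work through all 43 entries knot by knot. The paper instead invokes the fact (Theorem~2 in \cite{ad} / Corollary~1.12 in \cite{gl}) that $\Phi_K(q)$ depends only on the reduced Tait graph of $K$; this collapses the 43 entries to just 14 cases ($5_1,5_2,6_2,7_1,7_2,7_4,7_7,8_2,8_4,K_p,T(2,p),-3_1,-7_7,-8_4$). Without this step you would be doing a great deal of redundant computation, and more importantly you would not have a conceptual explanation for why, say, $\Phi_{6_1}=\Phi_{8_3}=h_5$.

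\textbf{Second, and more seriously, the analytic engine.} You organize everything around Bailey pairs, proposing to match the number of summation variables to the number of Bailey-chain iterations. But the multisums $S_K$ coming out of \cite{gl} have \emph{many} variables (five for $5_1$, seven for $7_1$, eight for $8_4$, and $2p+1$ for $T(2,p)$) and are \emph{not} in Bailey-chain normal form: there is one distinguished variable $a$ attached to the large $A$-face, and the remaining variables enter via terms like $(q)_{a+c_k}$ in the denominator and cross-terms $ac_k$ in the exponent. The paper's actual workhorse is not the Bailey chain but a Sears-type transformation (a limiting case of Corollary~1 in \cite{ab}) packaged as Lemma~\ref{key}: it rewrites the $a$-sum $\sum_a (-1)^{na}q^{na(a+1)/2-a+a\sum c_k}\big/\big((q)_a\prod_k(q)_{a+c_k}\big)$ as $1/(q)_\infty$ times a multisum in new variables $i_1,\dots,i_{n-2}$ in which the original $c_k$ appear only linearly in the exponent. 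After this step the $c_k$-sums peel off one by one via Euler's identities \eqref{e1},\eqref{e2} and \eqref{andy}, leaving a short sum to which \eqref{rr}, \eqref{ag}, or the $q$-binomial theorem applies. Bailey pairs enter only at the very end, and only for the false-theta targets $h_{2p}$ (the $7_2$ and $K_p$ cases), via the specific pair B(3) of Slater and its iterates. Your proposal would have you searching for a Bailey pair that directly produces a seven-variable sum, which does not exist in the standard repertoire; the missing idea is precisely this Sears-type ``un-nesting'' lemma.

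On the $8_5$ entry: you are right that it must be excluded from the statement --- the paper proves only the entries with a stated value and explicitly leaves $\Phi_{8_5}(q)$ open.
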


In principle, one can use either Theorem 5.1 of \cite{ad} or Theorem 4.12 of \cite{mh} to give a skein-theoretic proof of Theorem \ref{main}. Here, we have chosen to highlight the role of $q$-series techniques in proving such identities. For example, one can use the Bailey machinery to quickly prove identity (2.7) in \cite{mh}. The paper is organized as follows. In Section 2, we provide the necessary background on $q$-series identities and the Bailey machinery. In Section 3, we clarify the construction of the $q$-multisums $\Phi_{K}(q)$ and $\Phi_{-K}(q)$ from \cite{gl} (see also \cite{gt}). In Section 4, we prove Theorem \ref{main}. It is interesting to note that the proofs for $5_1$ and $-8_4$ require (\ref{rr}) while those for $7_1$ and $T(2,p)$ utilize (\ref{ag}). Although, one can simplify $\Phi_{8_5}(q)$ using the techniques in this paper, a conjectural evaluation is still currently unknown. Moreover, it is not known for a general alternating knot $K$ if $\Phi_{K}(q)$ reduces as in the current pleasant situation.

\section{Preliminaries}

We first recall five $q$-series identities. The first two are due to Euler (see II.1 and II.2, page 236 in \cite{gr}), the third is the $z=1$ case of Lemma 2 in \cite{and}, the fourth is the $q$-binomial theorem (see II.4, page 236 in \cite{gr}) and the fifth is the Jacobi triple product (see II.28, page 239 in \cite{gr}):

\begin{equation} \label{e1}
\sum_{n=0}^{\infty} \frac{t^n}{(q)_{n}} = \frac{1}{(t)_{\infty}},
\end{equation}

\begin{equation} \label{e2}
\sum_{n=0}^{\infty} \frac{(-1)^n t^n q^{n(n-1)/2}}{(q)_{n}} = (t)_{\infty},
\end{equation}

\begin{equation} \label{andy} 
\sum_{n=0}^{\infty} \frac{q^{n^2 + An}}{(q)_{n} (q)_{n+A}} = \frac{1}{(q)_{\infty}}
\end{equation}

\noindent for any integer $A$,

\begin{equation} \label{qbt}
\sum_{n=0}^{\infty} \frac{(-1)^n t^n q^{\frac{n(n-1)}{2}}}{(q)_{n} (q)_{K-n}} = \frac{(t)_{K}}{(q)_{K}}
\end{equation}

\noindent and

\begin{equation} \label{jtp}
\sum_{n \in \mathbb{Z}} z^n q^{n^2} = (-zq; q^2)_{\infty} (-q/z; q^2)_{\infty} (q^2; q^2)_{\infty}.
\end{equation}

\noindent Here and throughout, we use the convention that

\begin{equation*} 
\frac{1}{(q)_{n}} = 0
\end{equation*}

\noindent for $n<0$. In addition, one can easily check that for $a$, $b \geq 0$,

\begin{equation} \label{negab}
\frac{(q^{-a-b})_{a}}{(q)_{a}} = (-1)^{a} q^{-\frac{a(a+1)}{2} - ab} \frac{(q)_{a+b}}{(q)_{a}(q)_{b}}.
\end{equation}

We now derive a key result which follows from a generalization of Sears' transformation (see III.15, page 242 in \cite{gr}).

\begin{lem} \label{key} For any $n>2$ and integers $c_k$,
\begin{equation*}
\sum_{a \geq 0} (-1)^{na}  \frac{q^{\frac{na(a+1)}{2} -a + a  \sum\limits_{k=1}^{n-1} c_{k}}}{(q)_{a}\prod\limits_{k=1}^{n-1}(q)_{a+c_{k}}} = \frac{1}{(q)_{\infty}} \sum_{i_{1}, \dotsc, i_{n-2} \geq 0} (-1)^{\sum\limits_{k=1}^{n-2}\sum\limits_{j=1}^{k}i_{j}} \frac{q^{\frac{1}{2}\sum\limits_{k=1}^{n-2}\bigl(\sum\limits_{j=1}^{k}i_{j}\bigr)\bigl(1+\sum\limits_{j=1}^{k}i_{j} \bigr)+ \sum\limits_{k=2}^{n-1}\sum\limits_{j=1}^{k-1}c_{k}i_{j}}}{\prod\limits_{k=1}^{n-2}(q)_{i_{k}}\prod\limits_{k=1}^{n-2} (q)_{c_{k} +\sum\limits_{j=1}^{k}i_{j} }}.
\end{equation*}
\end{lem}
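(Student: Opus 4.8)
The plan is to prove Lemma \ref{key} by induction on $n$, peeling off one summation variable at a time using a single application of (a suitable limiting case of) Sears' transformation at each step. The base case $n=3$ is the identity obtained by combining the generalized Sears transformation (III.15 of \cite{gr}) with Euler's identities (\ref{e1})--(\ref{e2}): one takes the sum on the left, which for $n=3$ reads $\sum_{a\geq 0}(-1)^{3a}q^{\frac{3a(a+1)}{2}-a+a(c_1+c_2)}/\bigl((q)_a(q)_{a+c_1}(q)_{a+c_2}\bigr)$, and rewrites it via Sears as a single sum over $i_1$ of the claimed shape. The core analytic input is thus a transformation that converts a ${}_{?}\phi_{?}$-type sum with a factor $(q)_{a}(q)_{a+c_1}\cdots(q)_{a+c_{n-1}}$ in the denominator and quadratic exponent $q^{na(a+1)/2}$ into one with a product of $n-2$ factors $(q)_{c_k+\sum_{j\le k}i_j}$ and the exponent $\frac{1}{2}\sum_k(\sum_{j\le k}i_j)(1+\sum_{j\le k}i_j)$; this is exactly the ``generalization of Sears' transformation'' the authors flag, and I expect it is obtained by iterating the classical Sears ${}_4\phi_3$ transformation or, more cleanly, by using Andrews' identity (\ref{andy}) together with (\ref{negab}) to absorb one $(q)_{a+c_k}$ at a time.

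Concretely, for the inductive step I would start from the left-hand side for parameter $n$ and isolate one of the Pochhammer factors, say $(q)_{a+c_{n-1}}$, writing $1/(q)_{a+c_{n-1}}$ via (\ref{e2}) or directly invoking (\ref{andy}) in the form $1/(q)_{\infty}=\sum_{m\geq 0} q^{m^2+c_{n-1}m}/\bigl((q)_m(q)_{m+c_{n-1}}\bigr)$ to trade the factor $(q)_{a+c_{n-1}}$ for a new summation index and a $(q)_a$-type denominator, thereby reducing the $n$-fold structure to the $(n-1)$-fold structure with $c_{n-1}$ relabelled appropriately and an extra external variable $i_{n-2}$. After applying the induction hypothesis to the resulting $(n-1)$-parameter sum, one collects the exponents: the new quadratic term $\frac{1}{2}(\sum_{j\le n-2}i_j)(1+\sum_{j\le n-2}i_j)$ and the new cross terms $\sum_{j\le n-2}c_{n-1}i_j$ should fall out, matching the telescoping pattern $\sum_{k=2}^{n-1}\sum_{j=1}^{k-1}c_k i_j$ and $\frac{1}{2}\sum_{k=1}^{n-2}(\sum_{j\le k}i_j)(1+\sum_{j\le k}i_j)$ in the statement. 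The sign bookkeeping $(-1)^{\sum_{k=1}^{n-2}\sum_{j=1}^{k}i_j}$ arises from the alternating signs in Euler's identity (\ref{e2}) compounded across the $n-2$ applications.

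The main obstacle, as is usual with multisum Bailey/Sears manipulations, will be the exponent bookkeeping: tracking the quadratic form in the $i_j$, the linear $c_k i_j$ cross terms, and the sign exponent through each application, and verifying that shifting the summation index in the reduction step (replacing $a$ by $a+$ something, or $c_{n-1}$ by $c_{n-1}+i_{n-2}$) produces exactly the nested-partial-sum pattern $N_k := \sum_{j=1}^{k} i_j$ appearing in the statement rather than some other triangular combination. A secondary subtlety is justifying the interchange of the (infinite) summations and the use of the convention $1/(q)_n=0$ for $n<0$, which is needed so that negative values of $c_k+N_k$ contribute nothing; here (\ref{negab}) is the tool for handling the genuinely negative-parameter Pochhammer symbols that appear before one restricts to the region where all indices are nonnegative. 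Once the $n=3$ case is checked directly against Sears, the induction itself is routine modulo this algebra, so I would present the base case in full and then carry out one clean inductive reduction, leaving the exponent verification as a displayed computation.
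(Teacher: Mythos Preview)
Your approach is genuinely different from the paper's. The paper does not argue by induction at all: it introduces an auxiliary parameter $t$, rewrites the left-hand side as a limit $t\to 0$ of a balanced series using $\lim_{t\to 0}(1/t)_a t^a=(-1)^a q^{a(a-1)/2}$, applies Corollary~1 of Andrews--Bowman \cite{ab} (their generalization of Sears' ${}_4\phi_3$ transformation) in a single step, and then lets $t\to 0$. So the ``generalized Sears'' transformation is quoted as a black box rather than reproved.

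Your inductive strategy is in the right spirit---indeed Andrews and Bowman obtain their corollary by iterating Sears---but the concrete reduction you sketch has a gap. If you expand $1/(q)_{a+c_{n-1}}$ via Euler's identity (\ref{e2}), the resulting inner sum over $a$ still carries the exponent $\tfrac{n a(a+1)}{2}$ while the denominator now contains only $(q)_a\prod_{k=1}^{n-2}(q)_{a+c_k}$; this is \emph{not} the left-hand side for parameter $n-1$, which would require $\tfrac{(n-1)a(a+1)}{2}$. Neither (\ref{e2}) nor (\ref{andy}) by itself lowers that coefficient, so the induction hypothesis cannot be invoked as stated. (Using (\ref{andy}) in the form $1/(q)_\infty=\sum_m q^{m^2+c_{n-1}m}/((q)_m(q)_{m+c_{n-1}})$ goes in the wrong direction: it is a summation, not an expansion of $1/(q)_{a+c_{n-1}}$.) To make the induction work you must apply an honest Sears-type ${}_4\phi_3$ transformation at each step---precisely the ingredient you mention but then set aside in favor of the ``cleaner'' route. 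If you pursue this, you will essentially be reproving Corollary~1 of \cite{ab}; the paper instead simply cites it.
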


\begin{proof} We first use that 

\begin{equation*}
\lim_{t \to 0} \, \Bigl(\frac{1}{t}\Bigr)_{n}t^n = (-1)^{n} q^{\frac{n(n-1)}{2}},
\end{equation*}

\noindent then apply Corollary 1 in \cite{ab} and simplify to obtain

\begin{equation*}
\begin{aligned}
& \sum_{a \geq 0} (-1)^{na}  \frac{q^{\frac{na(a+1)}{2} -a + a  \sum\limits_{k=1}^{n-1} c_{k}}}{(q)_{a}\prod\limits_{k=1}^{n-1}(q)_{a+c_{k}}}  =  \frac{1}{\prod\limits_{k=1}^{n-1}(q)_{c_{k}}} \lim_{t \to 0} \sum_{a \geq 0} \frac{(\frac{1}{t})_{a}^{n} \,  t^{na} \,  q^{a\bigl(n-1 + \sum\limits_{k=1}^{n-1}c_{k}\bigr)}}{(q)_{a}\prod\limits_{k=1}^{n-1}(q^{c_{k}+1})_{a}} \\
& = \frac{1}{\prod\limits_{k=1}^{n-1}(q)_{c_{k}}} \lim_{t \to 0} \frac{(tq^{c_{n-1}+1})_{\infty}(t^{n-1}q^{n-1 + \sum\limits_{k=1}^{n-1}c_{k}})_{\infty}}
{(q^{c_{n-1}+1})_{\infty}(t^{n}q^{n-1 +\sum\limits_{k=1}^{n-1}c_{k}})_{\infty}} \\
& \times \sum_{i_{1}, \dotsc, i_{n-2} \geq 0} \frac{(tq^{c_{2} +1})^{i_1}(tq^{c_{3} +1})^{i_1+i_2}\cdots (tq^{c_{n-1} +1})^{i_1+i_2+\dotsc +i_{n-2}}}{(q)_{i_1}(q)_{i_2}\cdots (q)_{i_{n-2}}} \\
& \times \frac{(\frac{1}{t})_{i_1}(\frac{1}{t})_{i_1 + i_2}\cdots (\frac{1}{t})_{i_1 + i_2 + \dotsc + i_{n-2}}}{(q^{c_1 + 1})_{i_1}(q^{c_2 + 1})_{i_1+i_2}\cdots(q^{c_{n-2} + 1})_{i_1+i_2+\ldots +i_{n-2}}}\\
& \times \frac{(tq^{c_1 + 1})_{i_1}\cdots (tq^{c_{n-2} +1 })_{i_{n-2}}(tq^{c_1 + 1})_{i_1}(t^{2}q^{2+c_1+c_2+i_1})_{i_2}\cdots (t^{n-2}q^{n-2 + c_1 + \ldots + c_{n-2} + i_1 + \ldots + i_{n-3}})_{i_{n-2}}}{(t^{n-1}q^{n-1 + c_1 + \ldots + c_{n-1}})_{i_1 + \ldots + i_{n-2}}} \\
& = \frac{1}{(q)_{\infty}} \sum_{i_{1}, \dotsc, i_{n-2} \geq 0} (-1)^{\sum\limits_{k=1}^{n-2}\sum\limits_{j=1}^{k}i_{j}} \frac{q^{\frac{1}{2}\sum\limits_{k=1}^{n-2}\bigl(\sum\limits_{j=1}^{k}i_{j}\bigr)\bigl(1+\sum\limits_{j=1}^{k}i_{j} \bigr)+ \sum\limits_{k=2}^{n-1}\sum\limits_{j=1}^{k-1}c_{k}i_{j}}}{\prod\limits_{k=1}^{n-2}(q)_{i_{k}}\prod\limits_{k=1}^{n-2} (q)_{c_{k} +\sum\limits_{j=1}^{k}i_{j} }}.
\end{aligned}
\end{equation*}

\end{proof}

We now recall the Bailey machinery as initiated by Bailey and Slater in the 1940's and 50's and perfected by Andrews in the 1980's (for further details, see \cite{An2}, \cite{An3} or \cite{war}). A pair of sequences $(\alpha_n,\beta_n)_{n \geq 0}$ satisfying

\begin{equation} \label{pairdef}
\beta_n = \sum_{k=0}^n \frac{\alpha_k}{(q)_{n-k}(aq)_{n+k}}
\end{equation} 

\noindent is called a {\it Bailey pair relative to $a$}. If $(\alpha_n,\beta_n)_{n \geq 0}$ is a Bailey pair relative to $a$, then so is  $(\alpha_n^{'},\beta_n^{'})_{n \geq 0}$ where

\begin{equation} \label{alphaprimedef}
\alpha_n^{'} = \frac{(b)_n(c)_n(aq/bc)^n}{(aq/b)_n(aq/c)_n}\alpha_n
\end{equation} 

\noindent and

\begin{equation} \label{betaprimedef}
\beta_n^{'} = \sum_{k=0}^n\frac{(b)_k(c)_k(aq/bc)_{n-k} (aq/bc)^k}{(aq/b)_n(aq/c)_n(q)_{n-k}} \beta_k.
\end{equation}

\noindent Iterating \eqref{alphaprimedef} and \eqref{betaprimedef} leads to a sequence of Bailey pairs, called the {\it Bailey chain}. Putting (\ref{alphaprimedef}) and (\ref{betaprimedef}) into (\ref{pairdef}) and letting $n \to \infty$ gives  

\begin{equation} \label{BL}
\sum_{n \geq 0} (b)_n(c)_n (aq/bc)^n \beta_n = \frac{(aq/b)_{\infty}(aq/c)_{\infty}}{(aq)_{\infty}(aq/bc)_{\infty}} \sum_{n \geq 0} \frac{(b)_n(c)_n(aq/bc)^n }{(aq/b)_n(aq/c)_n}\alpha_n.
\end{equation} 

For example, if we consider the Bailey pair relative to $q$ (see B(3) in \cite{Sl1})

\begin{equation} \label{alphab3}
\alpha_{n} = \frac{(1-q^{2n+1}) (-1)^{n} q^{\frac{3}{2}n^{2} + \frac{1}{2}n}}{1-q}
\end{equation} 
 
\noindent and
 
\begin{equation} \label{betab3}
\beta_{n} = \frac{1}{(q)_{n}},
\end{equation}

\noindent then one application of (\ref{alphaprimedef}) and (\ref{betaprimedef}) with $b$, $c \to \infty$ yields

\begin{equation} \label{newalphab3}
\alpha_{n}^{'} = \frac{(1-q^{2n+1}) (-1)^{n} q^{\frac{5}{2}n^{2} + \frac{3}{2}n}}{1-q}
\end{equation}

\noindent and

\begin{equation} \label{newbetab3}
\beta_{n}^{'} = \sum_{k=0}^{n} \frac{q^{k(k+1)}}{(q)_{k}(q)_{n-k}}
\end{equation}

\noindent while $l-2$ applications, $l>2$, of (\ref{alphaprimedef}) and (\ref{betaprimedef}) with $b$, $c \to \infty$ at each step produces

\begin{equation} \label{p2alphab3}
\alpha^{(l-2)}_{n} = \frac{(1-q^{2n+1}) (-1)^{n} q^{\frac{2l-1}{2}n^{2} + \frac{2l-3}{2}n}}{1-q}
\end{equation}

\noindent and

\begin{equation} \label{p2betab3}
\beta^{(l-2)}_{n} = \sum_{n=n_{l-1}, n_{l-2}, \dotsc, n_1 \geq 0} \frac{q^{\sum\limits_{k=1}^{l-2} n_k (n_k + 1)}}{(q)_{n_1} \prod\limits_{k=2}^{l-1} (q)_{n_k - n_{k-1}}}.
\end{equation}

\noindent Inserting (\ref{newalphab3}) and (\ref{newbetab3}) into (\ref{BL}), then letting $b \to \infty$ and $c=q$ gives

\begin{equation} \label{blb3}
\sum_{n, k \geq 0} (-1)^n \frac{q^{k(k+1) + \frac{n(n+1)}{2}} (q)_{n}}{(q)_{k} (q)_{n-k}} = \sum_{n \geq 0} q^{3n^2 + 2n} (1-q^{2n+1})
\end{equation} 

\noindent while substituting (\ref{p2alphab3}) and (\ref{p2betab3}) into (\ref{BL}), then letting $b \to \infty$ and $c=q$ leads to

\begin{equation} \label{genblb3}
\sum_{n_{l-1}, n_{l-2}, \dotsc, n_1 \geq 0} (-1)^{n_{l-1}}  \frac{q^{\sum\limits_{k=1}^{l-2} n_k (n_k + 1)+ \frac{n_{l-1} (n_{l-1}+1)}{2}} (q)_{n_{l-1}}} {(q)_{n_1} \prod\limits_{k=2}^{l-1} (q)_{n_k - n_{k-1}}} = \sum_{n \geq 0} q^{ln^2 + (l-1)n} (1 - q^{2n+1}).
\end{equation}

\section{$\Phi_{K}(q)$ and $\Phi_{-K}(q)$}

Let $K$ be an alternating knot with $c$ crossings and $D$ its associated diagram. We checkerboard $D$ with colors $A$ and $B$ such that the exterior $X$ is colored $A$ (here, we identify $D$ with the planar graph obtained by placing a vertex at each crossing and an edge at each arc) and let $\mathcal{T}_{K}$ be the Tait graph of $K$ (or, equivalently, of $D$). The reduced Tait graph $\mathcal{T}^{\prime}_{K}$ is obtained from $\mathcal{T}_{K}$ by replacing every set of two edges that connect the same two vertices by a single edge. Let $E(D)$ be the set of edges, $R$ the set of faces, $R_{A}$ the set of $A$-colored faces and $R_{B}$ the set of $B$-colored faces in $D$. The idea is to assign variables to each face of $D$, including $X$. Thus, we let

\begin{equation*}
S=\{s:R \to \mathbb{Z}: s(X)=0 \}.
\end{equation*}

For $F$, $F_{i}$ and $F_{j} \in R$, define $e(F)$ to be the number of edges of $F$, $cv(F_i, F_j)$ the number of common vertices and $ce(F_i, F_j)$ the number of common edges between $F_i$ and $F_j$. We now consider the functions $L: R \to \frac{1}{2} \mathbb{Z}$
and $Q: R \times R \to \mathbb{Z}$ given by

\begin{equation*}
  L(F) := \left\{
 \begin{array}{lll}
 1 & \text{if $F \in R_{B}$,}\\
    \frac{e(F)}{2} -1 & \text{if $F \in R_{A}$}
  \end{array} \right.
\end{equation*}

\noindent and

\begin{equation*}
Q(F_i, F_j) := \left\{
 \begin{array}{lll}
 0 & \text{if $i=j$, $F_i \in R_{B}$ or $i \neq j$, $F_i$, $F_j \in R_{A}$,}\\
  e(F_i)  & \text{if $i=j$, $F_i \in R_A$,} \\
   cv(F_i, F_j) & \text{if $i \neq j$, $F_i$, $F_j \in R_B$,} \\
    ce(F_i, F_j) & \text{if $i \neq j$, $F_i \in R_{B}$, $F_{j} \in R_A$ or $F_i \in R_{A}$, $F_j \in R_B$.} \\
  \end{array} \right.
\end{equation*}

We extend $s \in S$ to $E(D)$ by defining $s(e)$ to be the sum of the variables in adjacent faces. Furthermore, suppose $F \in R_B$ shares a common edge with the maximum number of faces in $R_A$. If $F$ is not unique, choose a face in $R_B$ that shares a common edge with the maximum number of faces in $R_A \setminus \{X\}$. If this latter face is not unique, choose from any of the remaining candidates of faces and let $F^{*}$ denote this choice. Finally, we let

\begin{equation*}
\Lambda := \{ s \in S: s(e) \geq 0, \forall e  \in E(D) \hspace{.1in} \text{and} \hspace{.1in} s(F^{*})=0 \}
\end{equation*}

\noindent and consider the functions $L^{\prime}: \Lambda \to \frac{1}{2} \mathbb{Z}^{|R| - 1}$ and $Q^{\prime}: \Lambda \to \frac{1}{2} \mathbb{Z}^{|R| - 1}$ defined by

\begin{equation*}
L^{\prime}(s) = \sum_{i=1}^{|R| - 1} L(F_i) s(F_i)
\end{equation*}

\noindent and

\begin{equation*}
Q^{\prime}(s) = \frac{1}{2} \sum_{1 \leq i, j \leq |R|-1} Q(F_i, F_j) s(F_i) s(F_j). 
\end{equation*}

The $q$-multisum $\Phi_{K}(q)$ is now given by (see Theorem 1.10 in \cite{gl})

\begin{equation*}
\Phi_{K}(q) = (q)_{\infty}^c S_{K} : = (q)_{\infty}^{c} \sum_{s \in \Lambda} (-1)^{2L^{\prime}(s)} \frac{q^{Q^{\prime}(s) + L^{\prime}(s)}}{\prod\limits_{e \in E(D)} (q)_{s(e)}}.
\end{equation*}

Let us illustrate this construction for $K=7_2$. We first consider

\begin{pspicture}(-2,2)(10,-3)
\rput(2,0){
\psscalebox{0.7}{	\psKnot[linewidth=1pt,knotscale=2](1,0){7-2}
	\rput(1,-1.5){A}
	\rput(1,1){A}
	\rput(0,1){B}
	\rput(2,1){B}
	\rput(-1,-1.5){B}
	\rput(3,-1.5){B}
	\rput(2,-3){B}
	\rput(0,-3){B}
	\rput(-2,2){A}
	\psline[linewidth=1pt]{->}(4,-1.5)(6,-1.5)
	\rput(9,-1.5){a}
	\rput(9,1){b}
	\rput(8,1){c}
	\rput(10,1){h}
	\rput(7,-1.5){d}
	\rput(11,-1.5){g}
	\rput(10,-3){f}
	\rput(8,-3){e}
	\rput(6,2){0}
	\psKnot[linewidth=1pt,knotscale=2](9,0){7-2}	}}
\end{pspicture}

In matrix notation, we have

\begin{equation*}
s = [c,d,e,f,g,h,a,b]^{T}, \quad L^{\prime}= [1,1,1,1,1,1,2,0],
\end{equation*}

\begin{equation}
Q^{\prime} = \begin{pmatrix}
0 & 1 & 0 & 0 & 0 & 2 & 1 & 1\\
1 & 0 & 1 & 0 & 0 & 0 & 1 & 0\\
0 & 1 & 0 & 1 & 0 & 0 & 1 & 0\\
0 & 0 & 1 & 0 & 1 & 0 & 1 & 0\\
0 & 0 & 0 & 1 & 0 & 1 & 1 & 0\\
2 & 0 & 0 & 0 & 1 & 0 & 1 & 1\\
1 & 1 & 1 & 1 & 1 & 1 & 6 & 0\\
1 & 0 & 0 & 0 & 0 & 1 & 0 & 2 \end{pmatrix}
\end{equation}

\noindent and

\begin{equation*}
\Lambda=\{[c,d,e,f,g,h,a,b]\in\mathbb{Z}^8 : a,b,c,d,e,f,g \geq 0, h=0\}.
\end{equation*}

\noindent Thus, in matrix notation, 

\begin{equation*}
\begin{aligned}
\Phi_{7_2}(q) & = (q)_{\infty}^7 S_{7_2} = (q)^{7}_{\infty}\sum_{s\in\Lambda}(-1)^{2L^{\prime} s}\frac{q^{s^{T}Q^{\prime} s + L^{\prime} s}}{\prod\limits_{e\in E(D)}(q)_{s(e)}}
\\
& = (q)^{7}_{\infty}\sum_{a,b,c,d,e,f,g \geq 0}\frac{q^{3a^{2} + 2a + b^2 + bc + ac + ad + ae + af + ag + cd + de + ef + fg + c + d + e + f + g}}{(q)_{a}(q)_{b}(q)_{c}(q)_{d}(q)_{e}(q)_{f}(q)_{g}(q)_{b+c}(q)_{a+c}(q)_{a+d}(q)_{a+e}(q)_{a+f}(q)_{a+g}}.
\end{aligned}
\end{equation*}

To compute $\Phi_{-K}(q)$, we repeat the above process but swap $A$ and $B$ faces while still imposing the condition that $s(X)=0$ and choosing $F^{*} \in R_{A}$. So, for $-K=-7_2$, 

\begin{pspicture}(-2,2)(10,-3)
\rput(2,0){
\psscalebox{0.7}{	\rput(1,0){\psscalebox{-1 1}{\psKnot[linewidth=1pt,knotscale=2](0,0){7-2}}}
	\rput(1,-1.5){B}
	\rput(1,1){B}
	\rput(0,1){A}
	\rput(2,1){A}
	\rput(-1,-1.5){A}
	\rput(3,-1.5){A}
	\rput(2,-3){A}
	\rput(0,-3){A}
	\rput(-2,2){B}
	\psline[linewidth=1pt]{->}(4,-1.5)(6,-1.5)
	\rput(9,-1.5){a}
	\rput(9,1){b}
	\rput(8,1){c}
	\rput(10,1){h}
	\rput(7,-1.5){d}
	\rput(11,-1.5){g}
	\rput(10,-3){f}
	\rput(8,-3){e}
	\rput(6,2){0}
	\rput(9,0){\psscalebox{-1 1}{\psKnot[linewidth=1pt,knotscale=2](0,0){7-2}}}	}}

\end{pspicture}

Here, 

\begin{equation*}
s = [c,d,e,f,g,h,a,b]^{T}, \quad L^{\prime} = \biggl[\frac{1}{2},0,0,0,0,\frac{1}{2},1,1\biggr],
\end{equation*}

\begin{equation*}
Q^{\prime} = \begin{pmatrix}

3 & 0 & 0 & 0 & 0 & 0 & 1 & 1\\
0& 2 & 0 & 0 & 0 & 0 & 1 & 0\\
0 & 0 & 2 & 0 & 0 & 0 & 1 & 0\\
0 & 0 & 0& 2 & 0 & 0 & 1 & 0\\
0 & 0 & 0 & 0 & 2 & 0 & 1 & 0\\
0 & 0 & 0 & 0 & 0 & 2 & 1 & 1\\
1 & 1 & 1 & 1 & 1 & 1 & 0 & 1\\
1 & 0 & 0 & 0 & 0 & 1 & 1 & 0 \end{pmatrix}
\end{equation*}

\noindent and

\begin{equation*}
\Lambda=\{[a,b,c,d,e,f,g,h]\in\mathbb{Z}^8 : a,b,c,d,e,f,g \geq 0, h=0\}.
\end{equation*}

This gives us 

\begin{equation*}
\begin{aligned}
\Phi_{-7_2}(q) & = (q)_{\infty}^{7} S_{-7_2} = (q)^{7}_{\infty}\sum_{s \in \Lambda}(-1)^{2L^{\prime} s}\frac{q^{s^{T} Q^{\prime} s+ L^{\prime} s}}{\prod\limits_{e\in \epsilon}(q)_{s(e)}}
\\
=&(q)^{7}_{\infty}\sum_{a,b,c,d,e,f,g \geq 0}\frac{q^{a + b + ab+ ac + ad + ae + af + ag  + bc  + \frac{c(3c+1)}{2} + d^2 + e^2 + f^2 + g^2}}{(q)_{a}(q)_{b}(q)_{c}(q)_{d}(q)_{e}(q)_{f}(q)_{g}(q)_{a+c}(q)_{a+d}(q)_{a+e}(q)_{a+f}(q)_{a+g}(q)_{b+c}}.
\end{aligned}
\end{equation*}

Finally, by Theorem 2 in \cite{ad} or Corollary 1.12 in \cite{gl}, if the reduced Tait graphs of two alternating knots $K$ and $K'$ are isomorphic, then $\Phi_{K}(q) = \Phi_{K'}(q)$. Thus, in order to deduce Theorem \ref{main}, it suffices to verify the conjectural identities in the following cases: $5_1$, $5_2$, $6_2$, $7_1$, $7_2$, $7_4$, $7_7$, $8_2$, $8_4$, $K_{p}$, $p>0$, $T(2,p)$, $-3_1$, $-7_7$ and $-8_4$. For each of these 14 knots, we provide the checkerboard coloring, assignment of variables and (reduced) Tait graph.  \\

\begin{table}[ht] 
\centering 
\begin{tabular}{c c c c}  
$5_1$& & & $\mathcal{T}_{5_1}$\\
\begin{pspicture}(-1,2)(2,-2)
\psscalebox{0.5}{	\psKnot[linewidth=1pt,knotscale=2](1,0){5-1}
	\rput(1,0){A}
	\rput(-1,1){B}
	\rput(3,1){B}
	\rput(-0.5,-1.5){B}
	\rput(2.5,-1.5){B}
	\rput(1,2){B}
	\rput(0,3){A}}
\end{pspicture}
& & 
\begin{pspicture}(-1,2)(2,-2)
\psscalebox{0.5}{	\rput(1,0){a}
	\rput(-1,1){b}
	\rput(3,1){d}
	\rput(-0.5,-1.5){0}
	\rput(2.5,-1.5){e}
	\rput(1,2){c}
	\rput(0,3){0}
	\psKnot[linewidth=1pt,knotscale=2](1,0){5-1}	}
\end{pspicture} 

&
\begin{pspicture}(-1,2)(2,-2)
\pspolygon[showpoints=true](1,1)(0,0)(0.5,-1)(1.5,-1)(2,0)
\end{pspicture}

\\ \vspace{.2in}

$5_2$& & & $\mathcal{T}_{5_2}$\\
\begin{pspicture}(-1,2)(2,-2)
\psscalebox{0.5}{	\psKnot[linewidth=1pt,knotscale=2](1,0){5-2}
	\rput(1,-1){A}
	\rput(-0.5,1){B}
	\rput(2.5,1){B}
	\rput(-0.25,-2){B}
	\rput(2.25,-2){B}
	\rput(1,1){A}
	\rput(-1,3){A}}
\end{pspicture}
& & 
\begin{pspicture}(-1,2)(2,-2)
\psscalebox{0.5}{	
	\rput(1,-1){a}
	\rput(-0.5,1){0}
	\rput(2.5,1){c}
	\rput(-0.25,-2){e}
	\rput(2.25,-2){d}
	\rput(1,1){b}
	\rput(-1,3){0}
	\psKnot[linewidth=1pt,knotscale=2](1,0){5-2}	}
\end{pspicture}

&
\begin{pspicture}(-1,2)(2,-2)
\pspolygon[showpoints=true](0,1)(0,-1)(2,-1)(2,1)
\end{pspicture}
\end{tabular} 
\end{table}


\begin{table}
 \centering 
\begin{tabular}{c c c c} 
$6_2$& & & $\mathcal{T}_{6_2}$\\
\begin{pspicture}(-1,2)(2,-2)
\psscalebox{0.5}{	
	\psKnot[linewidth=1pt, knotscale=2](1,0){6-2}
	\rput(-2,3){A}
	\rput(1,-1){A}
	\rput(1,1){A}
	\rput(-1,0){B}
	\rput(3,0){B}
	\rput(0.25,-2.5){B}
	\rput(1.75,-2.5){B}
	\rput(1,2){B}

}
\end{pspicture}
& &
\begin{pspicture}(-1,2)(2,-2)
\psscalebox{0.5}{	
	\psKnot[linewidth=1pt, knotscale=2](1,0){6-2}
	\rput(-2,3){0}
	\rput(1,-1){e}
	\rput(1,1){f}
	\rput(-1,0){0}
	\rput(3,0){c}
	\rput(0.25,-2.5){a}
	\rput(1.75,-2.5){b}
	\rput(1,2){d}
}
\end{pspicture}

&
\begin{pspicture}(-1,2)(2,-2)
\pspolygon[showpoints=true](1,1)(0,0)(0.5,-1)(1.5,-1)(2,0)
\psline[showpoints=true](1,1)(0.5,-1)
\end{pspicture}

\\ \vspace{.2in}

$7_1$& & & $\mathcal{T}_{7_1}$\\
\begin{pspicture}(-1,2)(2,-2)
\psscalebox{0.5}{		\psKnot[linewidth=1pt, knotscale=2](1,0){7-1}
	\rput(-2,3){A}
	\rput(1,0){A}
	\rput(1,2.5){B}
	\rput(-1,1.5){B}
	\rput(3,1.5){B}
	\rput(-1.5,-0.5){B}
	\rput(3.5,-0.5){B}
	\rput(0,-2){B}
	\rput(2,-2){B}
}
\end{pspicture}
& &
\begin{pspicture}(-1,2)(2,-2)
\psscalebox{0.5}{	
	\psKnot[linewidth=1pt, knotscale=2](1,0){7-1}
	\rput(-2,3){0}
	\rput(1,0){a}
	\rput(1,2.5){b}
	\rput(-1,1.5){0}
	\rput(3,1.5){c}
	\rput(-1.5,-0.5){g}
	\rput(3.5,-0.5){d}
	\rput(0,-2){f}
	\rput(2,-2){e}
}
\end{pspicture}

&
\begin{pspicture}(-1,2)(2,-2)
\pspolygon[showpoints=true](1,1)(0.5,0.7)(0,0)(0.5,-1)(1.5,-1)(2,0)(1.5,0.7)
\end{pspicture}
%

\\ \vspace{.2in}

$7_2$& & & $\mathcal{T}_{7_2}$\\
\begin{pspicture}(-1,2)(2,-2)
\psscalebox{0.5}{
	\psKnot[linewidth=1pt,knotscale=2](1,0){7-2}
	\rput(1,-1.5){A}
	\rput(1,1){A}
	\rput(0,1){B}
	\rput(2,1){B}
	\rput(-1,-1.5){B}
	\rput(3,-1.5){B}
	\rput(2,-3){B}
	\rput(0,-3){B}
	\rput(0,3){A}
}
\end{pspicture}
& & 
\begin{pspicture}(-1,2)(2,-2)
\psscalebox{0.5}{	
	\psKnot[linewidth=1pt,knotscale=2](1,0){7-2}
	\rput(1,-1.5){a}
	\rput(1,1){b}
	\rput(0,1){c}
	\rput(2,1){0}
	\rput(-1,-1.5){d}
	\rput(3,-1.5){g}
	\rput(2,-3){f}
	\rput(0,-3){e}
	\rput(0,3){0}
}
\end{pspicture}

&
\begin{pspicture}(-1,2)(2,-2)
\pspolygon[showpoints=true](0.5,1)(0,0)(0.5,-1)(1.5,-1)(2,0)(1.5,1)
\end{pspicture}

 
 \\ \vspace{.2in}

$7_4$& & & $\mathcal{T}_{7_4}$\\
\begin{pspicture}(-1,2)(2,-2)
\psscalebox{0.5}{	\psKnot[linewidth=1pt,knotscale=2](1,0){7-4}
	\rput(-2,3){A}
	\rput(2,0){A}
	\rput(0,0){A}
	\rput(1,1){B}
	\rput(1,-1){B}
	\rput(3,1){B}
	\rput(-1,1){B}
	\rput(-1,-1){B}
	\rput(3,-1){B}
}
\end{pspicture}
& & 
\begin{pspicture}(-1,2)(2,-2)
\psscalebox{0.5}{		\psKnot[linewidth=1pt,knotscale=2](1,0){7-4}
	\rput(-2,3){0}
	\rput(2,0){g}
	\rput(0,0){f}
	\rput(1,1){0}
	\rput(1,-1){c}
	\rput(3,1){a}
	\rput(-1,1){e}
	\rput(-1,-1){d}
	\rput(3,-1){b}
}
\end{pspicture}

&
\begin{pspicture}(-1,2)(2,-2)
\pspolygon[showpoints=true](0,1)(0,0)(0,-1)(2,-1)(2,0)(2,1)
\psline[showpoints=true](0,0)(2,0)
\end{pspicture}
\end{tabular}
\end{table}


 \begin{table}
 \centering 
\begin{tabular}{c c c c} 
$7_7$& & & $\mathcal{T}_{7_7}$\\
\begin{pspicture}(-1,2)(2,-2)
\psscalebox{0.5}{	\psKnot[linewidth=1pt,knotscale=2](1,0){7-7}
	\rput(-2,3){A}
	\rput(1,0.5){A}
	\rput(2.5,-0.5){A}
	\rput(-0.5,-0.5){A}
	\rput(3,1){B}
	\rput(-1,1){B}
	\rput(-1.5,-1){B}
	\rput(3.5,-1){B}
	\rput(1,-1){B}
}
\end{pspicture}
& &
\begin{pspicture}(-1,2)(2,-2)
\psscalebox{0.5}{		\psKnot[linewidth=1pt,knotscale=2](1,0){7-7}
	\rput(-2,3){0}
	\rput(1,0.5){e}
	\rput(2.5,-0.5){g}
	\rput(-0.5,-0.5){f}
	\rput(3,1){d}
	\rput(-1,1){a}
	\rput(-1.5,-1){b}
	\rput(3.5,-1){c}
	\rput(1,-1){0}
}
\end{pspicture}

&
\begin{pspicture}(-1,2)(2,-2)
\pspolygon[showpoints=true](1,1)(0,0)(0.5,-1)(1.5,-1)(2,0)
\psline[showpoints=true](0.5,-1)(2,0)
\psline[showpoints=true](0,0)(2,0)
\end{pspicture}

\\ \vspace{.1in}


$8_2$& & & $\mathcal{T}_{8_2}$\\
\begin{pspicture}(-1,2)(2,-2)
\psscalebox{0.5}{	\pscurve[linewidth=2pt] (0.9,-0.1)(0.2,-.9)(-1,0)(0,1)(0.1,.9)
\pscurve[linewidth=2pt] (0.4,.9)(0.5,0.9)(1,0)(1.8,-1.4)
\pscurve[linewidth=2pt] (1.9,-1.6)(3,-2.5)(3,-1.5)(2.9,-0.3)(3.1,-0.1)
\pscurve[linewidth=2pt] (3.3,0.1)(4,1.5)(3,1.5)(2.1,1.8)
\pscurve[linewidth=2pt] (1.9,1.9)(1,2)(0.5,1.5)(0.2,-.8)
\pscurve[linewidth=2pt] (0.2,-1)(1,-2)(2,-1.5)(2.9,-1.6)
\pscurve[linewidth=2pt] (3.1,-1.6)(4,-1.5)(3.2,0)(3,1.4)
\pscurve[linewidth=2pt] (3.1,1.6)(3.3,2)(2.5,2.5)(2,1.5)(1.1,0.1)

	\rput(0.5,0){A}
	\rput(2.5,0){A}
	\rput(-0.5,0){B}
	\rput(1.5,1.5){B}
	\rput(1.0,-1.5){B}
	\rput(2.75,2){B}
	\rput(2.75,-2){B}
	\rput(3.5,-1){B}
	\rput(3.5,1){B}
	\rput(-1,3){A}
}
\end{pspicture}
& &
\begin{pspicture}(-1,2)(2,-2)
\psscalebox{0.5}{		\pscurve[linewidth=2pt] (0.9,-0.1)(0.2,-.9)(-1,0)(0,1)(0.1,.9)
\pscurve[linewidth=2pt] (0.4,.9)(0.5,0.9)(1,0)(1.8,-1.4)
\pscurve[linewidth=2pt] (1.9,-1.6)(3,-2.5)(3,-1.5)(2.9,-0.3)(3.1,-0.1)
\pscurve[linewidth=2pt] (3.3,0.1)(4,1.5)(3,1.5)(2.1,1.8)
\pscurve[linewidth=2pt] (1.9,1.9)(1,2)(0.5,1.5)(0.2,-.8)
\pscurve[linewidth=2pt] (0.2,-1)(1,-2)(2,-1.5)(2.9,-1.6)
\pscurve[linewidth=2pt] (3.1,-1.6)(4,-1.5)(3.2,0)(3,1.4)
\pscurve[linewidth=2pt] (3.1,1.6)(3.3,2)(2.5,2.5)(2,1.5)(1.1,0.1)

	\rput(0.5,0){h}
	\rput(2.5,0){g}
	\rput(-0.5,0){a}
	\rput(1.5,1.5){b}
	\rput(1.0,-1.5){0}
	\rput(2.75,2){c}
	\rput(2.75,-2){f}
	\rput(3.5,-1){e}
	\rput(3.5,1){d}
	\rput(-1,3){0}
}
\end{pspicture}

&
\begin{pspicture}(-1,2)(2,-2)
\pspolygon[showpoints=true](1,1)(0.5,0.7)(0,0)(0.5,-1)(1.5,-1)(2,0)(1.5,0.7)
\psline[showpoints=true](1,1)(0,0)
\end{pspicture}


\\ \vspace{.2in}


$8_4$& & &  $\mathcal{T}_{8_4}$\\
\begin{pspicture}(-1,2)(2,-2)
\psscalebox{0.5}{	\pscurve[linewidth=2pt](1.5,0.8)(1,1)(0,0)(-0.5,-1)(0,-2)(1.5,-2.5)(3.1,-1.9)
\pscurve[linewidth=2pt](3.2,-1.7)(3,-1)(2.5,-1.3)(2,-1.5)(1.6,-1.4)
\pscurve[linewidth=2pt](1.4,-1.3)(1,-1)(0.1,-0.1)
\pscurve[linewidth=2pt](-0.1,0.1)(-0.5,1.5)(1.5,2.5)(3,2)(3.1,1.7)(3,1)(2.6,1.2)
\pscurve[linewidth=2pt](2.4,1.4)(2,1.5)(1.6,0.8)(1.5,0)(1.1,-0.9)
\pscurve[linewidth=2pt](1,-1.1)(1,-2)(1.5,-1.4)(2,-0.7)(2.5,-1.2)
\pscurve[linewidth=2pt](2.6,-1.4)(3.1,-1.8)(4,0)(3.4,1.6)(3.2,1.7)
\pscurve[linewidth=2pt](3,1.8)(2.8,1.8)(2.4,1.2)(2,0.7)(1.7,0.8)
	\rput(1,0){A}
	\rput(2,1){A}
	\rput(2.9,1.5){A}
	\rput(1.1,-1.5){A}
	\rput(2,-1){A}
	\rput(2.9,-1.35){A}
	\rput(3,0){B}
	\rput(1,1.5){B}
	\rput(0.5,-1.5){B}
	\rput(-1,3){A}
}
\end{pspicture}
& &
\begin{pspicture}(-1,2)(2,-2)
\psscalebox{0.5}{		\pscurve[linewidth=2pt](1.5,0.8)(1,1)(0,0)(-0.5,-1)(0,-2)(1.5,-2.5)(3.1,-1.9)
\pscurve[linewidth=2pt](3.2,-1.7)(3,-1)(2.5,-1.3)(2,-1.5)(1.6,-1.4)
\pscurve[linewidth=2pt](1.4,-1.3)(1,-1)(0.1,-0.1)
\pscurve[linewidth=2pt](-0.1,0.1)(-0.5,1.5)(1.5,2.5)(3,2)(3.1,1.7)(3,1)(2.6,1.2)
\pscurve[linewidth=2pt](2.4,1.4)(2,1.5)(1.6,0.8)(1.5,0)(1.1,-0.9)
\pscurve[linewidth=2pt](1,-1.1)(1,-2)(1.5,-1.4)(2,-0.7)(2.5,-1.2)
\pscurve[linewidth=2pt](2.6,-1.4)(3.1,-1.8)(4,0)(3.4,1.6)(3.2,1.7)
\pscurve[linewidth=2pt](3,1.8)(2.8,1.8)(2.4,1.2)(2,0.7)(1.7,0.8)

	\rput(1,0){e}
	\rput(2,1){d}
	\rput(2.9,1.5){c}
	\rput(1.1,-1.5){f}
	\rput(2,-1){g}
	\rput(2.9,-1.35){h}
	\rput(3,0){0}
	\rput(1,1.5){b}
	\rput(0.5,-1.5){a}
	\rput(-1,3){0}
}
\end{pspicture}

&
\begin{pspicture}(-1,2)(2,-2)
\pspolygon[showpoints=true](0,0)(2,0)(1,1)
\end{pspicture}

\\ \vspace{.2in}

$K_p, p>0$& & & $\mathcal{T}_{K_p}$, $p>0$\\
\begin{pspicture}(-1,2)(2,-2)
\psscalebox{0.5}{	\psline[linewidth=2pt](3,1)(3.5,1.4)
\pscurve[linewidth=2pt](3.6,1.6)(4,2.5)(2,2)(1.1,0.1)
\pscurve[linewidth=2pt](0.9,-0.1)(0.5,-0.5)(0,0)(-1,1.5)(0,2.5)(1.9,2)
\pscurve[linewidth=2pt](2.1,2)(3,2)(3.5,1.5)(4,1)

\pscurve[linewidth=2pt](3,-1)(3.5,-1.5)(4,-2.5)(2.1,-2.1)
\pscurve[linewidth=2pt](1.9,-1.9)(1,0)(0.5,0.5)(0.1,0.1)
\pscurve[linewidth=2pt](-0.1,-0.1)(-1,-1.5)(0,-2.5)(2,-2)(3,-2)(3.5,-1.6)
\psline[linewidth=2pt](3.6,-1.5)(4,-1)

	\pscircle[fillcolor=black,fillstyle=solid](3.5,0.5){0.1}
	\pscircle[fillcolor=black,fillstyle=solid](3.5,0){0.1}
	\pscircle[fillcolor=black,fillstyle=solid](3.5,-0.5){0.1}

	\rput(2,0){A}
	\rput(0.5,0){A}
	\rput(0,1.5){B}
	\rput(0,-1.5){B}
	\rput(3.5,2){B}
	\rput(3.5,-2){B}

	\rput(-1,3){A}
}
\end{pspicture}
& &
\begin{pspicture}(-1,2)(2,-2)
\psscalebox{0.5}{\psline[linewidth=2pt](3,1)(3.5,1.4)
\pscurve[linewidth=2pt](3.6,1.6)(4,2.5)(2,2)(1.1,0.1)
\pscurve[linewidth=2pt](0.9,-0.1)(0.5,-0.5)(0,0)(-1,1.5)(0,2.5)(1.9,2)
\pscurve[linewidth=2pt](2.1,2)(3,2)(3.5,1.5)(4,1)

\pscurve[linewidth=2pt](3,-1)(3.5,-1.5)(4,-2.5)(2.1,-2.1)
\pscurve[linewidth=2pt](1.9,-1.9)(1,0)(0.5,0.5)(0.1,0.1)
\pscurve[linewidth=2pt](-0.1,-0.1)(-1,-1.5)(0,-2.5)(2,-2)(3,-2)(3.5,-1.6)
\psline[linewidth=2pt](3.6,-1.5)(4,-1)

	\pscircle[fillcolor=black,fillstyle=solid](3.5,0.5){0.1}
	\pscircle[fillcolor=black,fillstyle=solid](3.5,0){0.1}
	\pscircle[fillcolor=black,fillstyle=solid](3.5,-0.5){0.1}

	\rput(2,0){\fontsize{12pt}{12pt}\selectfont$a$}
	\rput(0.5,0){\fontsize{12pt}{12pt}\selectfont$b$}
	\rput(0,1.5){\fontsize{12pt}{12pt}\selectfont$c_1$}
	\rput(0,-1.5){\fontsize{12pt}{12pt}\selectfont$0$}
	\rput(3.5,2){\fontsize{12pt}{12pt}\selectfont$c_2$}
	\rput(3.5,-2.2){\fontsize{12pt}{12pt}\selectfont$c_{2p-1}$}

	\rput(-1,3){A}	
}
\end{pspicture}

&
\begin{pspicture}(-1,2)(2,-2)
\psline[showpoints=true](2,-0.5)(1.5,-1)(0.5,-1)(0,-0.5)(0,0.5)(0.5,1)(1,1)
\psframe[linecolor=white, fillcolor=white,fillstyle=solid](1.1,0)(2,1)
\pscircle[fillcolor=black,fillstyle=solid](1.3,0.9){0.05}
\pscircle[fillcolor=black,fillstyle=solid](1.8,0.6){0.05}
\pscircle[fillcolor=black,fillstyle=solid](1.9,0.2){0.05}
\psscalebox{0.5}{\rput(2,0){$2p$ edges}}
\end{pspicture}
\end{tabular}  
\end{table}

\begin{table} 
\centering 
\begin{tabular}{c c c c} 
 $T(2,p), p>0$& & & $\mathcal{T}_{T(2,p)}$, $p>0$\\
\begin{pspicture}(-1,2)(2,-2)
\psscalebox{0.5}{	\rput(1,0){

\pscurve[linewidth=2pt](3,1.5)(2.8,2)(2.5,2.2)(1.8,2.2)(1,2.5)(0.55,2.9)
\pscurve[linewidth=2pt](0.45,3)(0,3.5)(-0.6,2.8)(-1,2.4)(-1.7,2.5)
\pscurve[linewidth=2pt](-1.9,2.5)(-2.7,2.4)(-2.5,1.4)(-2.4,0.8)(-3,0.1)
\pscurve[linewidth=2pt](-3.1,0)(-3.2,-0.8)(-2.7,-1.2)(-2,-1.5)(-1.8,-2.2)
\pscurve[linewidth=2pt](-1.8,-2.4)(-1.5,-3)(-1,-3)(-0.6,-2.8)(0,-2.5)(0.5,-2.9)
\pscurve[linewidth=2pt](0.6,-3.1)(1,-3.2)(1.5,-2.5)(1.6,-2.4)
\psline[linewidth=2pt](1.7,-2.2)(1.9,-1.7)

\psline[linewidth=2pt](2,1.5)(2,2)
\pscurve[linewidth=2pt](2,2.4)(1,3.3)(0.5,3)(0,2.5)(-0.6,2.5)
\pscurve[linewidth=2pt](-0.7,2.7)(-1,3.2)(-1.5,2.8)(-1.8,2.5)(-1.8,2)(-2.4,1.5)
\pscurve[linewidth=2pt](-2.6,1.5)(-3.3,1)(-3.1,0.1)(-2.5,-0.8)(-2.4,-1.1)
\pscurve[linewidth=2pt](-2.4,-1.3)(-2.5,-2.2)(-1.8,-2.3)(-0.6,-2.4)(-0.5,-2.6)
\pscurve[linewidth=2pt](-0.5,-2.8)(0,-3.5)(0.5,-3)(1,-2.2)(1.8,-2.3)(3,-2)(3,-1.5)

	\pscircle[fillcolor=black,fillstyle=solid](2.5,1){0.1}
	\pscircle[fillcolor=black,fillstyle=solid](2.5,0.5){0.1}
	\pscircle[fillcolor=black,fillstyle=solid](2.5,0){0.1}
	\pscircle[fillcolor=black,fillstyle=solid](2.5,-0.5){0.1}
	\pscircle[fillcolor=black,fillstyle=solid](2.5,-1){0.1}

	\rput(0,0){A}
	\rput(0,3){B}
	\rput(1.1,2.8){B}
	\rput(-1.1,2.8){B}
	\rput(-2.1,2.1){B}
	\rput(-2.75,0.9){B}
	\rput(0,-3){B}
	\rput(1.1,-2.7){B}
	\rput(-1.1,-2.7){B}
	\rput(-2.1,-2.0){B}
	\rput(-2.75,-0.8){B}
	\rput(-2,3){A}}
}
\end{pspicture}
& &
\begin{pspicture}(-1,2)(2,-2)
\psscalebox{0.5}{	\rput(1,0){\pscurve[linewidth=2pt](3,1.5)(2.8,2)(2.5,2.2)(1.8,2.2)(1,2.5)(0.55,2.9)
\pscurve[linewidth=2pt](0.45,3)(0,3.5)(-0.6,2.8)(-1,2.4)(-1.7,2.5)
\pscurve[linewidth=2pt](-1.9,2.5)(-2.7,2.4)(-2.5,1.4)(-2.4,0.8)(-3,0.1)
\pscurve[linewidth=2pt](-3.1,0)(-3.2,-0.8)(-2.7,-1.2)(-2,-1.5)(-1.8,-2.2)
\pscurve[linewidth=2pt](-1.8,-2.4)(-1.5,-3)(-1,-3)(-0.6,-2.8)(0,-2.5)(0.5,-2.9)
\pscurve[linewidth=2pt](0.6,-3.1)(1,-3.2)(1.5,-2.5)(1.6,-2.4)
\psline[linewidth=2pt](1.7,-2.2)(1.9,-1.7)

\psline[linewidth=2pt](2,1.5)(2,2)
\pscurve[linewidth=2pt](2,2.4)(1,3.3)(0.5,3)(0,2.5)(-0.6,2.5)
\pscurve[linewidth=2pt](-0.7,2.7)(-1,3.2)(-1.5,2.8)(-1.8,2.5)(-1.8,2)(-2.4,1.5)
\pscurve[linewidth=2pt](-2.6,1.5)(-3.3,1)(-3.1,0.1)(-2.5,-0.8)(-2.4,-1.1)
\pscurve[linewidth=2pt](-2.4,-1.3)(-2.5,-2.2)(-1.8,-2.3)(-0.6,-2.4)(-0.5,-2.6)
\pscurve[linewidth=2pt](-0.5,-2.8)(0,-3.5)(0.5,-3)(1,-2.2)(1.8,-2.3)(3,-2)(3,-1.5)
	\pscircle[fillcolor=black,fillstyle=solid](2.5,1){0.1}
	\pscircle[fillcolor=black,fillstyle=solid](2.5,0.5){0.1}
	\pscircle[fillcolor=black,fillstyle=solid](2.5,0){0.1}
	\pscircle[fillcolor=black,fillstyle=solid](2.5,-0.5){0.1}
	\pscircle[fillcolor=black,fillstyle=solid](2.5,-1){0.1}

	\rput(0,0){\fontsize{12pt}{12pt}\selectfont $a$}
	\rput(0,3){\fontsize{12pt}{12pt}\selectfont$0$}
	\rput(1.1,2.8){\fontsize{12pt}{12pt}\selectfont$b_{2p}$}
	\rput(-1.1,2.8){\fontsize{12pt}{12pt}\selectfont$b_1$}
	\rput(-2.1,2.1){\fontsize{12pt}{12pt}\selectfont$b_2$}
	\rput(-2.75,0.9){\fontsize{12pt}{12pt}\selectfont$b_3$}
	\rput(0,-3){\fontsize{12pt}{12pt}\selectfont$b_7$}
	\rput(1.05,-2.7){\fontsize{12pt}{12pt}\selectfont$b_8$}
	\rput(-1.1,-2.7){\fontsize{12pt}{12pt}\selectfont$b_6$}
	\rput(-2.1,-2.0){\fontsize{12pt}{12pt}\selectfont$b_5$}
	\rput(-2.75,-0.8){\fontsize{12pt}{12pt}\selectfont$b_4$}
	\rput(-2,3){\fontsize{12pt}{12pt}\selectfont$0$}}
}
\end{pspicture}

&
\begin{pspicture}(-1,2)(2,-2)
\psline[showpoints=true](2,-0.5)(1.5,-1)(0.5,-1)(0,-0.5)(0,0.5)(0.5,1)(1,1)
\psframe[linecolor=white,fillcolor=white,fillstyle=solid](1.1,0)(2,1)
\psframe[linecolor=white,fillcolor=white,fillstyle=solid](1.1,0)(2,1)
\pscircle[fillcolor=black,fillstyle=solid](1.3,0.9){0.05}
\pscircle[fillcolor=black,fillstyle=solid](1.8,0.6){0.05}
\pscircle[fillcolor=black,fillstyle=solid](1.9,0.2){0.05}
\psscalebox{0.5}{\rput(2,0){$2p+ 1$ edges}}
\end{pspicture}

\\ \vspace{.1in}

\end{tabular}  
\end{table}
\hfill
\begin{table} 
\centering 
\begin{tabular}{c c c c}

$-3_1$& & & $\mathcal{T}_{-3_1}$\\
\begin{pspicture}(-1,2)(2,-2)
\psscalebox{0.5}{	\psKnot[linewidth=1pt,knotscale=2](1,0){3-1}
	\rput(-2,3){B}
	\rput(1,0){B}
	\rput(1,2){A}
	\rput(-1,-0.5){A}
	\rput(3,-0.5){A}
}
\end{pspicture}
& &
\begin{pspicture}(-1,2)(2,-2)
\psscalebox{0.5}{		\psKnot[linewidth=1pt,knotscale=2](1,0){3-1}
	\rput(-2,3){0}
	\rput(1,0){a}
	\rput(1,2){0}
	\rput(-1,-0.5){b}
	\rput(3,-0.5){c}
}
\end{pspicture}

&
\begin{pspicture}(-1,2)(2,-2)
\psline[showpoints=true](1,1)(1,0)
\end{pspicture}

 
 \\ \vspace{.2in}

$-7_7$& & &$\mathcal{T}_{-7_7}$\\
\begin{pspicture}(-1,2)(2,-2)
\psscalebox{0.5}{	\psKnot[linewidth=1pt,knotscale=2](1,0){7-7}
	\rput(-2,3){B}
	\rput(1,0.5){B}
	\rput(2.5,-0.5){B}
	\rput(-0.5,-0.5){B}
	\rput(3,1){A}
	\rput(-1,1){A}
	\rput(-1.5,-1){A}
	\rput(3.5,-1){A}
	\rput(1,-1){A}
}
\end{pspicture}
& &
\begin{pspicture}(-1,2)(2,-2)
\psscalebox{0.5}{		\psKnot[linewidth=1pt,knotscale=2](1,0){7-7}
	\rput(-2,3){0}
	\rput(1,0.5){e}
	\rput(2.5,-0.5){g}
	\rput(-0.5,-0.5){f}
	\rput(3,1){d}
	\rput(-1,1){a}
	\rput(-1.5,-1){b}
	\rput(3.5,-1){c}
	\rput(1,-1){0}
}
\end{pspicture}

&
\begin{pspicture}(-1,2)(2,-2)
\pspolygon[showpoints=true](1,1)(0,0)(1,-1)(2,0)
\psline[showpoints=true](0,0)(2,0)
\end{pspicture}


\\ \vspace{.2in}


$-8_4$& & &  $\mathcal{T}_{-8_4}$\\
\begin{pspicture}(-1,2)(2,-2)
\psscalebox{0.5}{	\pscurve[linewidth=2pt](1.5,0.8)(1,1)(0,0)(-0.5,-1)(0,-2)(1.5,-2.5)(3.1,-1.9)
\pscurve[linewidth=2pt](3.2,-1.7)(3,-1)(2.5,-1.3)(2,-1.5)(1.6,-1.4)
\pscurve[linewidth=2pt](1.4,-1.3)(1,-1)(0.1,-0.1)
\pscurve[linewidth=2pt](-0.1,0.1)(-0.5,1.5)(1.5,2.5)(3,2)(3.1,1.7)(3,1)(2.6,1.2)
\pscurve[linewidth=2pt](2.4,1.4)(2,1.5)(1.6,0.8)(1.5,0)(1.1,-0.9)
\pscurve[linewidth=2pt](1,-1.1)(1,-2)(1.5,-1.4)(2,-0.7)(2.5,-1.2)
\pscurve[linewidth=2pt](2.6,-1.4)(3.1,-1.8)(4,0)(3.4,1.6)(3.2,1.7)
\pscurve[linewidth=2pt](3,1.8)(2.8,1.8)(2.4,1.2)(2,0.7)(1.7,0.8)
	\rput(1,0){B}
	\rput(2,1){B}
	\rput(2.9,1.5){B}
	\rput(1.15,-1.5){B}
	\rput(2,-1){B}
	\rput(2.9,-1.35){B}
	\rput(3,0){A}
	\rput(1,1.5){A}
	\rput(0.5,-1.5){A}
	\rput(-1,3){B}
}
\end{pspicture}
& &
\begin{pspicture}(-1,2)(2,-2)
\psscalebox{0.5}{		\pscurve[linewidth=2pt](1.5,0.8)(1,1)(0,0)(-0.5,-1)(0,-2)(1.5,-2.5)(3.1,-1.9)
\pscurve[linewidth=2pt](3.2,-1.7)(3,-1)(2.5,-1.3)(2,-1.5)(1.6,-1.4)
\pscurve[linewidth=2pt](1.4,-1.3)(1,-1)(0.1,-0.1)
\pscurve[linewidth=2pt](-0.1,0.1)(-0.5,1.5)(1.5,2.5)(3,2)(3.1,1.7)(3,1)(2.6,1.2)
\pscurve[linewidth=2pt](2.4,1.4)(2,1.5)(1.6,0.8)(1.5,0)(1.1,-0.9)
\pscurve[linewidth=2pt](1,-1.1)(1,-2)(1.5,-1.4)(2,-0.7)(2.5,-1.2)
\pscurve[linewidth=2pt](2.6,-1.4)(3.1,-1.8)(4,0)(3.4,1.6)(3.2,1.7)
\pscurve[linewidth=2pt](3,1.8)(2.8,1.8)(2.4,1.2)(2,0.7)(1.7,0.8)

	\rput(1,0){e}
	\rput(2,1){d}
	\rput(2.9,1.5){c}
	\rput(1.1,-1.5){f}
	\rput(2,-1){g}
	\rput(2.9,-1.35){h}
	\rput(3,0){0}
	\rput(1,1.5){b}
	\rput(0.5,-1.5){a}
	\rput(-1,3){0}
}
\end{pspicture}

&
\begin{pspicture}(-1,2)(2,-2)
\pspolygon[showpoints=true](1,1)(0.5,0.7)(0,0)(0.5,-1)(1.5,-1)(2,0)(1.5,0.7)
\psline[showpoints=true](1,1)(0.5,-1)
\end{pspicture}
\\
\end{tabular}  
\end{table}

\newpage

\section{Proof of Theorem \ref{main}}

We can now prove Theorem \ref{main}. 

\begin{proof}[Proof of Theorem \ref{main}] For $\Phi_{5_1}(q)$, it suffices to prove

\begin{equation} \label{51}
S_{5_1}:=\sum_{a,b,c,d,e \geq 0} (-1)^{a}\frac{q^{\frac{a(5a+3)}{2} + ab + ac + ad + ae + bc + cd + de + b + c + d + e}}{(q)_{a}(q)_{b}(q)_{c}(q)_{d}(q)_{e}(q)_{a+b}(q)_{a+c}(q)_{a+d}(q)_{a+e}} = \frac{1}{(q)^5_{\infty}} h_5.
\end{equation}

\noindent We now have

\begin{equation*}
\begin{aligned}
S_{5_1} &= \frac{1}{(q)_{\infty}}\sum_{i,j,k,b,c,d,e \geq 0} (-1)^{i+k}\frac{q^{\frac{3i(i+1)}{2} +  j^{2} + j + \frac{k(k+1)}{2} + 2ij + jk + ki + b + bc + c + ci + cd + d + di + dj + de + e + ei + ej + ek}}{(q)_{i}(q)_{j}(q)_{k}(q)_{b}(q)_{c}(q)_{d}(q)_{e}(q)_{i+b}(q)_{i+j+c}(q)_{i+j+k+d}} \\
& (\text{apply Lemma \ref{key} to the $a$-sum with $n=5$}) \\
& = \frac{1}{(q)_{\infty}^{2}}\sum_{i,j,k,b,c,d \geq 0}(-1)^{i+k}\frac{q^{\frac{3i(i+1)}{2} + j^{2} + j + \frac{k(k+1)}{2} + 2ij + jk + ki + b + bc + c + ci + cd + d + di + dj }}{(q)_{i}(q)_{j}(q)_{k}(q)_{b}(q)_{c}(q)_{d}(q)_{i+b}(q)_{i+j+c}} \\
& (\text{evaluate the $e$-sum with (\ref{e1})}) \\
& = \frac{1}{(q)_{\infty}^{5}}\sum_{i,j,k \geq 0} (-1)^{i+k}\frac{q^{\frac{3i(i+1)}{2} + j^{2} + j + \frac{k(k+1)}{2} + 2ij + jk + ki}}{(q)_{i}(q)_{j}(q)_{k}} \\
&  (\text{evaluate the $d$-sum, $c$-sum and $b$-sum with (\ref{e1})}) \\
& = \frac{1}{(q)_{\infty}^{5}}\sum_{i,j,k \geq 0} (-1)^{i+k}\frac{q^{\frac{i(i+1)}{2}+ j^{2} + j + \frac{k(k+1)}{2}+ jk}}{(q)_{i}(q)_{j-i}(q)_{k}} \quad (\text{shift $j \to j-i$}) \\
& = \frac{1}{(q)_{\infty}^{5}}\sum_{j,k \geq 0} (-1)^{k}\frac{q^{j^{2} + j + \frac{k(k+1)}{2}+ jk}}{(q)_{k}} \quad (\text{apply (\ref{qbt}) to the $i$-sum}) \\
& = \frac{1}{(q)_{\infty}^{4}}\sum_{j \geq 0}\frac{q^{j^{2} + j}}{(q)_{j}} \quad (\text{apply (\ref{e2}) to the $k$-sum}) \\
& = \frac{(q; q^5)_{\infty} (q^4; q^5)_{\infty} (q^5; q^5)_{\infty}}{(q)_{\infty}^5} \quad (\text{by (\ref{rr})}) \\
& =  \frac{1}{(q)^5_{\infty}} h_5 \quad (\text{apply (\ref{jtp}) with $q \to q^{5/2}$, $z=-q^{3/2}$}). 
\end{aligned}
\end{equation*}

For $\Phi_{5_2}(q)$, it suffices to prove

\begin{equation} \label{52}
S_{5_2}:= \sum_{a,b,c,d,e \geq 0} \frac{q^{2a^{2} + b^{2} + ac + ad + ae + bc + cd + de + a + c + d + e}}{(q)_{a}(q)_{b}(q)_{c}(q)_{d}(q)_{e}(q)_{b+c}(q)_{a+c}(q)_{a+d}(q)_{a+e}} = \frac{1}{(q)^5_{\infty}} h_4.
\end{equation}

\noindent Thus,

\begin{align}
S_{5_2} & = \frac{1}{(q)_{\infty}}\sum_{a,c,d,e \geq 0} \frac{q^{2a^{2} + ac + ad + ae + cd + de + a + c + d + e}}{(q)_{a}(q)_{c}(q)_{d}(q)_{e}(q)_{a+c}(q)_{a+d}(q)_{a+e}} \label{52step} \\
& (\text{evaluate the $b$-sum with (\ref{andy})}) \nonumber \\
& =\frac{1}{(q)_{\infty}^{2}}\sum_{i,j,c,d,e \geq 0} (-1)^{j} \frac{q^{i^{2} +i + \frac{j^{2} + j}{2} + ij + di + e(i+j) + cd + de + c + d + e}}{(q)_{i}(q)_{j}(q)_{c}(q)_{d}(q)_{e}(q)_{i+c}(q)_{i+j+d}}  \nonumber \\
& (\text{apply Lemma \ref{key} to the $a$-sum with $n=4$}) \nonumber \\
& =\frac{1}{(q)_{\infty}^{3}}\sum_{i,j,c,d\geq 0} (-1)^{j} \frac{q^{i^{2} +i + \frac{j^{2} + j}{2} + ij + di + cd + c + d}}{(q)_{i}(q)_{j}(q)_{c}(q)_{d}(q)_{i+c}} \quad (\text{evaluate the $e$-sum with (\ref{e1})})  \nonumber \\
& =\frac{1}{(q)_{\infty}^{5}}\sum_{i,j\geq 0} (-1)^{j} \frac{q^{i^{2} +i + \frac{j^{2} + j}{2} + ij}}{(q)_{i}(q)_{j}} \quad (\text{evaluate the $d$-sum and $c$-sum with (\ref{e1})})  \nonumber \\
& =\frac{1}{(q)_{\infty}^{5}}\sum_{i,j\geq 0} (-1)^{j} \frac{q^{i^{2} +i + \frac{j^{2} - j}{2} - ij}}{(q)_{i-j}(q)_{j}} \quad (\text{shift $i \to i-j$})  \nonumber \\
& = \frac{1}{(q)_{\infty}^{5}}\sum_{i\geq 0}(-1)^{i} q^{\frac{i^{2} +i}{2}} \quad (\text{apply (\ref{qbt}) to the $j$-sum, then use (\ref{negab})}) \nonumber \\
& =\frac{1}{(q)^5_{\infty}} h_4  \nonumber \\
& (\text{consider $i=2n$, $i=2n+1$, then let $n \to -n-1$ in the second resulting sum}).  \nonumber 
\end{align}

For $\Phi_{6_2}(q)$, it suffices to prove

\begin{equation*} \label{62}
S_{6_2}:= \sum_{a,b,c,d,e,f \geq 0}(-1)^{e} \frac{q^{2f^{2} + f + \frac{e(3e+1)}{2} + ab + af + bc + bf + cd + ce + cf + de + a + b + c + d}}{(q)_{a}(q)_{b}(q)_{c}(q)_{d}(q)_{e}(q)_{f}(q)_{a+f}(q)_{b+f}(q)_{c+e}(q)_{c+f}(q)_{d+e}} = \frac{1}{(q)^5_{\infty}} h_4. 
\end{equation*}

\noindent Thus,

\begin{equation*}
\begin{aligned}
S_{6_2} & =\frac{1}{(q)_{\infty}}\sum_{a,b,c,d,e,f \geq 0}(-1)^{e} \frac{q^{2f^{2} + f + \frac{e(e+1)}{2} + ab + af + bc + bf + cd + cf + de + a + b + c + d}}{(q)_{a}(q)_{b}(q)_{c}(q)_{d}(q)_{e}(q)_{f}(q)_{a+f}(q)_{b+f}(q)_{c+e}(q)_{c+f}} \\
& (\text{apply Lemma \ref{key} to the $e$-sum with $n=3$}) \\
& =\frac{1}{(q)^{2}_{\infty}}\sum_{a,b,c,e,f \geq 0}(-1)^{e} \frac{q^{2f^{2} + f + \frac{e(e+1)}{2} + ab + af + bc + bf + cf + a + b + c}}{(q)_{a}(q)_{b}(q)_{c}(q)_{e}(q)_{f}(q)_{a+f}(q)_{b+f}(q)_{c+f}} \\
& (\text{evaluate the $d$-sum with (\ref{e1})}) \\
\end{aligned}
\end{equation*}

\begin{equation*}
\begin{aligned}
& =\frac{1}{(q)_{\infty}}\sum_{a,b,c,f \geq 0}\frac{q^{2f^{2} + f + ab + af + bc + bf + cf + a + b + c}}{(q)_{a}(q)_{b}(q)_{c}(q)_{f}(q)_{a+f}(q)_{b+f}(q)_{c+f}} \quad (\text{evaluate the $e$-sum with (\ref{e2})}) \\
& = \frac{1}{(q)^5_{\infty}} h_4 \quad (\text{let $(a,b,c,f) \to (c,d,e,a)$, then proceed with (\ref{52step})}). 
\end{aligned} 
\end{equation*}

For $\Phi_{7_1}(q)$, it suffices to prove

\begin{equation*} \label{71}
\begin{aligned}
S_{7_1}&:=\sum_{a,b,c,d,e,f,g \geq 0} (-1)^{a}\frac{q^{\frac{a(7a+5)}{2} + ab + ac + ad + ae + af + ag + bc + cd + de + ef + fg + b + c + d + e+ f + g}}{(q)_{a}(q)_{b}(q)_{c}(q)_{d}(q)_{e}(q)_{f}(q)_{g}(q)_{a+b}(q)_{a+c}(q)_{a+d}(q)_{a+e}(q)_{a+f}(q)_{a+g}} \\
& = \frac{1}{(q)^7_{\infty}} h_7.
\end{aligned}
\end{equation*}

\noindent Thus, 

\begin{equation*}
\begin{aligned}
& S_{7_1} =\frac{1}{(q)_{\infty}} \sum_{i,j,k,l,m,b,c,d \geq 0} (-1)^{i+k+m} \frac{q^{ \frac{5i(i+1)}{2} + 2j(j+1) + \frac{3k(k+1)}{2} + l(l+1) + \frac{m(m+1)}{2}}}{(q)_{i}(q)_{j}(q)_{k}(q)_{l}(q)_{m}} \\
& \times \frac{q^{bc + cd + de + ef + fg + b + c + d + e+ f + g}}{(q)_{b} (q)_{c} (q)_{d} (q)_{e} (q)_{f} (q)_{g} } \\
& \times \frac{q^{4ij + 3ik + 2il + im + 3jk + 2jl + jm + 2kl + km + lm + ci + d(i+j) + e(i+j+k) + f(i+j+k+l) + g(i+j+k+l+m)}}{(q)_{b+i}(q)_{c+i+j}(q)_{d+i+j+k}(q)_{e+i+j+k+l}(q)_{f+i+j+k+l+m}} \\
& (\text{apply Lemma \ref{key} to the $a$-sum with $n=7$})  \\
& =\frac{1}{(q)_{\infty}^{7}} \sum_{i,j,k,l,m \geq 0} (-1)^{i+k+m} \frac{q^{ \frac{5i(i+1)}{2} + 2j(j+1) + \frac{3k(k+1)}{2} + l(l+1) + \frac{m(m+1)}{2}}}{(q)_{i}(q)_{j}(q)_{k}} \\ 
& \times \frac{q^{4ij + 3ik + 2il + im + 3jk + 2jl + jm + 2kl + km + lm}}{(q)_{l}(q)_{m}} \\
& (\text{evaluate the $g$-sum, $f$-sum, $e$-sum, $d$-sum, $c$-sum and $b$-sum with (\ref{e1})}) \\
& =\frac{1}{(q)_{\infty}^{7}} \sum_{i,j,k,l,m \geq 0} (-1)^{i+k+m} \frac{q^{ \frac{i(i+1)}{2} + 2j(j+1) + \frac{3k(k+1)}{2} + l(l+1) + \frac{m(m+1)}{2} + 3jk + 2jl + jm + 2kl + km + lm}}{(q)_{i}(q)_{j-i}(q)_{k}(q)_{l}(q)_{m}} \\
& (\text{shift $j \to j-i$}) \\
& =\frac{1}{(q)_{\infty}^{7}} \sum_{j,k,l,m \geq 0} (-1)^{k+m} \frac{q^{ 2j(j+1) + \frac{3k(k+1)}{2} + l(l+1) + \frac{m(m+1)}{2} + 3jk + 2jl + jm + 2kl + km + lm}}{(q)_{k}(q)_{l}(q)_{m}} \\
& (\text{evaluate the $i$-sum with (\ref{qbt})}) \\
\end{aligned}
\end{equation*}

\begin{equation*}
\begin{aligned}
& =\frac{1}{(q)_{\infty}^{7}} \sum_{j,k,l,m \geq 0} (-1)^{k+m} \frac{q^{ 2j(j+1) + \frac{k(k+1)}{2} + l(l+1) + \frac{m(m+1)}{2} + jk + 2jl + jm + lm}}{(q)_{k}(q)_{l-k}(q)_{m}} \quad (\text{shift $l \to l-k$}) \\
& =\frac{1}{(q)_{\infty}^{7}} \sum_{j,l,m \geq 0} (-1)^{m} \frac{q^{ 2j(j+1) + l(l+1) + \frac{m(m+1)}{2} + 2jl + jm + lm}(q^{1+j})_{l}}{(q)_{l}(q)_{m}}  \\ 
& (\text{evaluate the $k$-sum with (\ref{qbt})}) \\
&=\frac{1}{(q)_{\infty}^{6}} \sum_{j,l \geq 0} \frac{q^{ 2j(j+1) + l(l+1) + 2jl}}{(q)_{j}(q)_{l}} \quad (\text{evaluate the $m$-sum with (\ref{e2}) and simplfy}) \\
& = \frac{(q;q^{7})_{\infty}(q^{6};q^{7})_{\infty}(q^{7};q^{7})_{\infty}}{(q)_{\infty}^{7}} \quad (\text{by (\ref{ag}) with $k=3$, $n_1 = l$, $n_2 = j$}) \\
&  = \frac{1}{(q)^7_{\infty}} h_7 \quad (\text{by (\ref{jtp}) with $q \to q^{7/2}$, $z=-q^{5/2}$}).
\end{aligned} 
\end{equation*}

For $\Phi_{7_2}(q)$, it suffices to prove

\begin{equation} \label{72}
\begin{aligned}
S_{7_2} &:= \sum_{a,b,c,d,e,f,g \geq 0}\frac{q^{3a^{2} + 2a + b^2 + bc + ac + ad + ae + af + ag + cd + de + ef + fg + c + d + e+ f + g}}{(q)_{a}(q)_{b}(q)_{c}(q)_{d}(q)_{e}(q)_{f}(q)_{g}(q)_{b+c}(q)_{a+c}(q)_{a+d}(q)_{a+e}(q)_{a+f}(q)_{a+g}} \\
& = \frac{1}{(q)^7_{\infty}} h_6.
\end{aligned}
\end{equation}

\noindent Thus, 

\begin{equation*}
\begin{aligned}
S_{7_2} & =  \frac{1}{(q)_{\infty}^2} \sum_{i,j,k,l,c,d,e,f,g \geq 0} (-1)^{j+l} \frac{q^{2i(i+1) + \frac{3j(j+1)}{2} + k(k+1) + \frac{l(l+1)}{2}}}{(q)_{i}(q)_{j}(q)_{k}(q)_{l}} \\
& \times \frac{q^{3ij + 2ik + il + 2jk + jl + kl + di + e(i+j) + f(i+j+k) + g(i+j+k+l) + cd + de + ef + fg + c + d + e + f + g}}{(q)_{c+i}(q)_{d+i+j}(q)_{e+i+j+k}(q)_{f+i+j+k+l} (q)_c (q)_d (q)_e (q)_f (q)_g} \\
& (\text{evaluate the b-sum with (\ref{andy}) and apply Lemma \ref{key} to the $a$-sum with $n=6$}) \\
& =\frac{1}{(q)_{\infty}^{7}} \sum_{i,j,k,l \geq 0} (-1)^{j+l} \frac{q^{2i(i+1) + \frac{3j(j+1)}{2} + k(k+1) + \frac{l(l+1)}{2}+ 3ij + 2ik + il + 2jk + jl + kl }}{(q)_{i}(q)_{j}(q)_{k}(q)_{l}} \\
& (\text{evaluate the $g$-sum, $f$-sum, $e$-sum, $d$-sum and $c$-sum with (\ref{e1})}) \\
\end{aligned}
\end{equation*}

\begin{equation*}
\begin{aligned}
& =\frac{1}{(q)_{\infty}^{7}} \sum_{i,j,k,l \geq 0} (-1)^{j+l} \frac{q^{2i(i+1) + \frac{j(j-1)}{2} + k(k+1) + \frac{l(l+1)}{2} - ij + 2ik + il + kl }}{(q)_{i-j}(q)_{j}(q)_{k}(q)_{l}} \quad (\text{shift $i \to i-j$}) \\
& =\frac{1}{(q)_{\infty}^{7}} \sum_{i,k,l \geq 0} (-1)^{i+l} \frac{q^{\frac{3i(i+1)}{2} + k(k+1) + \frac{l(l+1)}{2} + 2ik + il + kl }}{(q)_{k}(q)_{l}} \\
& (\text{evaluate the $j$-sum with (\ref{qbt}), then use (\ref{negab})}) \\
& =\frac{1}{(q)_{\infty}^{7}} \sum_{i,k,l \geq 0} (-1)^{i+l} \frac{q^{\frac{3i(i+1)}{2} + k(k+1) + \frac{l(l-1)}{2} + 2ik - il - kl }}{(q)_{k-l}(q)_{l}} \quad (\text{shift $k \to k-l$}) \\
&=\frac{1}{(q)_{\infty}^{7}} \sum_{i,k \geq 0} (-1)^{i+k} \frac{q^{\frac{3i(i+1)}{2} + \frac{k(k+1)}{2} + ik}(q)_{k+i}}{(q)_{i}(q)_{k}} \\
& (\text{evaluate the $l$-sum with (\ref{qbt}), then use (\ref{negab}) and simplify}) \\
& =\frac{1}{(q)_{\infty}^{7}} \sum_{i,k \geq 0} (-1)^{k} \frac{q^{i(i+1) + \frac{k(k+1)}{2}} (q)_{k}}{(q)_{i}(q)_{k-i}} \quad (\text{shift $k \to k-i$}) \\
& = \frac{1}{(q)_{\infty}^{7}} \sum_{n \geq 0}q^{3n^{2} + 2n}(1-q^{2n+1}) \quad (\text{apply (\ref{blb3})}) \\
& =  \frac{1}{(q)^7_{\infty}} h_6 \quad (\text{let $n \to -n-1$ in the second sum}). 
\end{aligned} 
\end{equation*}

For $\Phi_{7_4}(q)$, it suffices to prove

\begin{equation*} \label{74} 
\begin{aligned}
S_{7_4}& := \sum_{a,b,c,d,e,f,g \geq 0} \frac{q^{2f^{2} + f +2g^{2} + g  + ab + ag + bc + bg + cd + cf + cg + de +df + ef + a + b + c + d + e}}{(q)_{a}(q)_{b}(q)_{c}(q)_{d}(q)_{e}(q)_{f}(q)_{g}(q)_{a+g}(q)_{b+g}(q)_{c+f}(q)_{c+g}(q)_{d+f}(q)_{e+f}} \\
& = \frac{1}{(q)^7_{\infty}} h_4^2. 
\end{aligned}
\end{equation*}

\noindent Thus, 

\begin{equation*}
\begin{aligned}
S_{7_4} & =\frac{1}{(q)^{2}_{\infty}}\sum_{a,b,c,d,e,i,j,k,l \geq 0}(-1)^{j+l} \frac{q^{i^2 + i + \frac{j(j+1)}{2} + k^2 + k + \frac{l(l+1)}{2} + ij + kl + di +e(i+j) + bk + c(k+l)  + ab + bc + cd + de + a + b + c + d + e}}{(q)_{a}(q)_{b}(q)_{c}(q)_{d}(q)_{e}(q)_{i}(q)_{j}(q)_{k}(q)_{l}(q)_{a+k}(q)_{b+k+l}(q)_{c+i}(q)_{d+i+j}} \\
& (\text{apply Lemma \ref{key} to the $f$-sum and $g$-sum with $n=4$}) \\
& =\frac{1}{(q)^{7}_{\infty}}\sum_{i,j,k,l \geq 0} (-1)^{j+l}\frac{q^{i^2 + i + \frac{j(j+1)}{2} + k^2 + k + \frac{l(l+1)}{2} + ij + kl}}{(q)_{i}(q)_{j}(q)_{k}(q)_{l}} \\
& (\text{evaluate the $e$-sum, $d$-sum, $c$-sum, $b$-sum and $a$-sum with (\ref{e1})}) \\
\end{aligned}
\end{equation*}

\begin{equation*}
\begin{aligned}
& =\frac{1}{(q)^{7}_{\infty}}\sum_{i,j,k,l \geq 0}(-1)^{j+l} \frac{q^{i^2 + i + \frac{j(j-1)}{2} + k^2 + k + \frac{l(l-1)}{2} - ij - kl}}{(q)_{i-j}(q)_{j}(q)_{k-l}(q)_{l}} \quad \text{(shift $i \to i-j$ and $k \to k-l$}) \\
& =\frac{1}{(q)^{7}_{\infty}}\sum_{i,k \geq 0}(-1)^{i+k}q^{\frac{i(i+1)}{2}+ \frac{k(k+1)}{2}} \quad (\text{evaluate the $j$-sum and $l$-sum with (\ref{qbt}), then use (\ref{negab})}) \\
& = \frac{1}{(q)^7_{\infty}} h_4^2 \quad (\text{as in the proof of (\ref{52})}). 
\end{aligned} 
\end{equation*}

For $\Phi_{7_7}(q)$, it suffices to prove

\begin{equation*} \label{77}
\begin{aligned}
S_{7_7} & := \sum_{a,b,c,d,e,f,g \geq 0} (-1)^{e+f+g}\frac{q^{\frac{3e^2}{2} + \frac{e}{2} +\frac{3f^2}{2} + \frac{f}{2} +\frac{3g^2}{2} + \frac{g}{2} + ab + ad + ae + af + bf + cd + cg + de + dg + a + b + c + d}}{(q)_{a}(q)_{b}(q)_{c}(q)_{d}(q)_{e}(q)_{f}(q)_{g}(q)_{a+e}(q)_{d+e}(q)_{a+f}(q)_{b+f}(q)_{c+g}(q)_{d+g}} \\
& =  \frac{1}{(q)_{\infty}^4}.
\end{aligned}
\end{equation*}

\noindent Thus, 

\begin{equation*}
\begin{aligned}
S_{7_7} & = \frac{1}{(q)_{\infty}^{3}} \sum_{a,b,c,d,e,f,g \geq 0} (-1)^{e+f+g}\frac{q^{\frac{e^2}{2} + \frac{e}{2} +\frac{f^2}{2} + \frac{f}{2} +\frac{g^2}{2} + \frac{g}{2} + ab + ad + ae + bf + cd + cg + a + b + c + d}}{(q)_{a}(q)_{b}(q)_{c}(q)_{d}(q)_{e}(q)_{f}(q)_{g}(q)_{d+e}(q)_{a+f}(q)_{d+g}} \\
& (\text{apply Lemma \ref{key} to $e$-sum, $f$-sum and $g$-sum with $n=3$}) \\
& = \frac{1}{(q)_{\infty}^{7}} \sum_{e,f,g \geq 0} (-1)^{e+f+g}\frac{q^{\frac{e(e+1)}{2}   +\frac{f(f+1)}{2} +\frac{g(g+1)}{2}}}{(q)_{e}(q)_{f}(q)_{g}} \\
& (\text{evaluate the $c$-sum, $b$-sum, $a$-sum and $d$-sum using (\ref{e1})}) \\
&  =  \frac{1}{(q)_{\infty}^4} \quad (\text{evaluate the $e$-sum, $f$-sum and $g$-sum using (\ref{e2})}).
\end{aligned} 
\end{equation*}

For $\Phi_{8_2}(q)$, it suffices to prove

\begin{equation*} \label{82}
\begin{aligned}
S_{8_2} & :=\sum_{a,b,c,d,e,f,g,h \geq 0}(-1)^{b}\frac{q^{3a^{2} + 2a + \frac{b(3b+1)}{2} + ad + ae + af + ag + ah + bc + bd + cd + de + ef + fg + gh + c + d + e+ f + g + h}}{(q)_{a}(q)_{b}(q)_{c}(q)_{d}(q)_{e}(q)_{f}(q)_{g}(q)_{h}(q)_{b+c}(q)_{b+d}(q)_{a+d}(q)_{a+e}(q)_{a+f}(q)_{a+g}(q)_{a+h}} \\
& = \frac{1}{(q)^7_{\infty}} h_6.
\end{aligned}
\end{equation*}

\noindent Thus, 

\begin{equation*}
\begin{aligned}
S_{8_2} & =\frac{1}{(q)_{\infty}}\sum_{a,b,c,d,e,f,g,h \geq 0}(-1)^{b}\frac{q^{3a^{2} + 2a + \frac{b(b+1)}{2} + ad + ae + af + ag + ah + bc + cd + de + ef + fg + gh + c + d + e+ f + g + h}}{(q)_{a}(q)_{b}(q)_{c}(q)_{d}(q)_{e}(q)_{f}(q)_{g}(q)_{h}(q)_{b+d}(q)_{a+d}(q)_{a+e}(q)_{a+f}(q)_{a+g}(q)_{a+h}} \\
& (\text{apply Lemma 2.5 to the $b$-sum with $n=3$}) \\
& =\frac{1}{(q)^{2}_{\infty}}\sum_{a,b,d,e,f,g,h \geq 0}(-1)^{b}\frac{q^{3a^{2} + 2a + \frac{b(b+1)}{2} + ad + ae + af + ag + ah + de + ef + fg + gh + d + e+ f + g + h}}{(q)_{a}(q)_{b}(q)_{d}(q)_{e}(q)_{f}(q)_{g}(q)_{h}(q)_{a+d}(q)_{a+e}(q)_{a+f}(q)_{a+g}(q)_{a+h}} \\
& (\text{evaluate the $c$-sum with (\ref{e1})}) \\
& =\frac{1}{(q)_{\infty}}\sum_{a,d,e,f,g,h \geq 0}\frac{q^{3a^{2} + 2a + ad + ae + af + ag + ah + de + ef + fg + gh + d + e+ f + g + h}}{(q)_{a}(q)_{d}(q)_{e}(q)_{f}(q)_{g}(q)_{h}(q)_{a+d}(q)_{a+e}(q)_{a+f}(q)_{a+g}(q)_{a+h}} \\
& (\text{evaluate the $b$-sum with (\ref{e2})}) \\
& = \frac{1}{(q)^7_{\infty}} h_6 \quad (\text{let $(a,d,e,f,g,h) \to (a,c,d,e,f,g)$, then follow the proof of (\ref{72})}).
\end{aligned} 
\end{equation*}

For $\Phi_{8_4}(q)$, it suffices to prove

\begin{equation*} \label{84}
\begin{aligned}
S_{8_4} &:= \sum_{a,b,c,d,e,f,g,h \geq 0} (-1)^{e}\frac{q^{\frac{e(3e+1)}{2} + ae + be + ab + a + b + c^{2} + bc + d^{2} +bd + f^{2} + af + g^{2} + ag + h^{2} + ah}}{(q)_{a}(q)_{b}(q)_{c}(q)_{d}(q)_{e}(q)_{f}(q)_{g}(q)_{h}(q)_{a+e}(q)_{a+f}(q)_{a+g}(q)_{a+h}(q)_{b+c}(q)_{b+d}(q)_{b+e}} \\
& = \frac{1}{(q)^7_{\infty}}.
\end{aligned}
\end{equation*}

\noindent Thus, 

\begin{equation*}
\begin{aligned}
S_{8_4} & = \frac{1}{(q)_{\infty}^{5}}\sum_{a,b,e \geq 0} (-1)^{e}\frac{q^{\frac{e(3e+1)}{2} + ae + be + ab + a + b}}{(q)_{a}(q)_{b}(q)_{e}(q)_{a+e}(q)_{b+e}} \\
& (\text{evaluate the $c$-sum, $d$-sum, $f$-sum, $g$-sum and $h$-sum with (\ref{andy})}) \\
& =\frac{1}{(q)_{\infty}^{6}}\sum_{a,b,e \geq 0} (-1)^{e}\frac{q^{\frac{e(e+1)}{2} + be + ab + a + b}}{(q)_{a}(q)_{b}(q)_{e}(q)_{a+e}} \quad (\text{apply Lemma \ref{key} to the $e$-sum with $n=3$}) \\
& =\frac{1}{(q)_{\infty}^{8}}\sum_{e \geq 0} (-1)^{e}\frac{q^{\frac{e(e+1)}{2}}}{(q)_{e}} \quad (\text{evaluate the $b$-sum and $a$-sum with (\ref{e1})}) \\
& = \frac{1}{(q)^7_{\infty}} \quad (\text{evaluate the $e$-sum with (\ref{e2})}).
\end{aligned} 
\end{equation*}

For $\Phi_{T(2,p)}(q)$ with $p>0$, it suffices to prove

\begin{equation*} \label{T2p}
\begin{aligned}
S_{T(2,p)} & := \sum_{a,b_{1},...,b_{2p} \geq 0} (-1)^{a}\frac{q^{\frac{a((2p+1)a+(2p-1))}{2} + a\sum\limits_{n=1}^{2p}b_{n} + \sum\limits_{n=1}^{2p-1}b_{n}b_{n+1} + \sum\limits_{n=1}^{2p}b_{n}}}{(q)_{a}\prod\limits_{n=1}^{2p}(q)_{b_{n}}(q)_{a+b_{n}}} \\
& = \frac{1}{(q)^{2p+1}_{\infty}} h_{2p+1}.
\end{aligned}
\end{equation*}

\noindent Thus, 

\begin{equation*}
\begin{aligned}
S_{T(2,p)} & = \frac{1}{(q)_{\infty}} \sum_{i_{1},...,i_{2p-1},b_{1},...,b_{2p} \geq 0} (-1)^{\sum\limits_{k=1}^{2p-1}\sum\limits_{j=1}^{k}i_{j}} \frac{q^{\frac{1}{2}\sum\limits_{k=1}^{2p-1} \bigl(\sum\limits_{j=1}^{k} i_{j} \bigr) \bigl( 1 + \sum\limits_{j=1}^{k} i_j \bigr)  + \sum\limits_{k=2}^{2p}\sum\limits_{j=1}^{k-1}b_{k}i_{j} + \sum\limits_{k=1}^{2p}b_{k} + \sum\limits_{k=1}^{2p-1}b_{k}b_{k+1}}}{\prod\limits_{k=1}^{2p-1}(q)_{i_{k}}\prod\limits_{k=1}^{2p-1}(q)_{b_{k}+\sum\limits_{j=1}^{k}i_{j}}\prod\limits_{k=1}^{2p}(q)_{b_{k}}}\\
& (\text{apply Lemma \ref{key} to the $a$-sum with $n=2p+1$}) \\
& =\frac{1}{(q)_{\infty}^{2p+1}} \sum_{i_{1},...,i_{2p-1} \geq 0} (-1)^{\sum\limits_{k=1}^{2p-1}\sum\limits_{j=1}^{k}i_{j}} \frac{q^{\frac{1}{2}\sum\limits_{k=1}^{2p-1} \bigl(\sum\limits_{j=1}^{k} i_{j} \bigr) \bigl( 1 + \sum\limits_{j=1}^{k} i_j \bigr)}}{\prod\limits_{k=1}^{2p-1}(q)_{i_{k}}} \\
& (\text{evaluate the $b_{2p}$-sum, $b_{2p-1}$-sum, $\dotsc$ and $b_1$-sum with (\ref{e1})}) \\
& =\frac{1}{(q)_{\infty}^{2p+1}} \sum_{i_{1},...,i_{2p-1} \geq 0} (-1)^{\sum\limits_{k=1}^{p}i_{2k-1}} \frac{q^{\frac{1}{2}\sum\limits_{k=1}^{p}i_{2k-1}(i_{2k-1}+1) + \sum\limits_{k=1}^{p}i_{2k-1}\sum\limits_{j=1}^{k-1}i_{2j} + \sum\limits_{k=1}^{p-1} \bigl( \sum\limits_{j=1}^{k}i_{2j} \bigr) \bigl(\sum\limits_{j=1}^{k}i_{2j}+1 \bigr)}}{ \prod\limits_{k=1}^{p} (q)_{i_{2k-1}} \prod\limits_{k=1}^{p-1}(q)_{i_{2k}  - i_{2k-1}}} \\
& (\text{shift $i_{2k} \to i_{2k} - i_{2k-1}$ for $k=1$, $2$, $\dotsc$, $p-1$}) \\
& =\frac{1}{(q)_{\infty}^{2p+1}} \sum_{i_{2},i_{4},...,i_{2p-2},i_{2p-1} \geq 0} (-1)^{i_{2p-1}} \frac{q^{\frac{i_{2p-1}(i_{2p-1}+1)}{2} + i_{2p-1}\sum\limits_{j=1}^{p-1}i_{2j} + \sum\limits_{k=1}^{p-1} \bigl(\sum\limits_{j=1}^{k}i_{2j} \bigr) \bigl(\sum\limits_{j=1}^{k}i_{2j}+1 \bigr)}}{(q)_{i_{2p-1}}} \\
& \times \prod\limits_{k=1}^{p-1} \frac{(q)_{\sum\limits_{j=1}^{k }i_{2j}}}{(q)_{\sum\limits_{j=1}^{k-1}i_{2j}}(q)_{i_{2k}}} \\
& (\text{evaluate the $i_1$-sum, $i_3$-sum, $\dotsc$ and $i_{2p-3}$-sum with (\ref{qbt}), then simplify}) \\
\end{aligned}
\end{equation*}

\begin{equation*}
\begin{aligned}
&  =\frac{1}{(q)_{\infty}^{2p+1}} \sum_{i_{2},i_{4},...,i_{2p-2},i_{2p-1} \geq 0} (-1)^{i_{2p-1}} q^{\frac{i_{2p-1}(i_{2p-1}+1)}{2} + i_{2p-1}\sum\limits_{j=1}^{p-1}i_{2j} + \sum\limits_{k=1}^{p-1} \bigl(\sum\limits_{j=1}^{k}i_{2j} \bigr) \bigl(\sum\limits_{j=1}^{k} i_{2j} + 1 \bigr)} \\
& \times \frac{(q)_{\sum\limits_{k=1}^{p-1} i_{2k}}}{(q)_{i_{2p-1}} \prod\limits_{k=1}^{p-1} (q)_{i_{2k}}} \quad (\text{simplify the product}) \\
& =\frac{1}{(q)_{\infty}^{2p}} \sum_{i_{2},i_{4},...,i_{2p-2} \geq 0}  \frac{q^{ \sum\limits_{k=1}^{p-1}(\sum\limits_{j=1}^{k}i_{2j})(\sum\limits_{j=1}^{k} i_{2j}+1)}}{\prod\limits_{k=1}^{p-1}(q)_{i_{2k}}} \quad (\text{evaluate the $i_{2p-1}$-sum with (\ref{e2})}) \\
& = \frac{1}{(q)^{2p+1}_{\infty}} h_{2p+1} \\
& (\text{let $n_j = i_{2j}$ and $k=p$ in (\ref{ag}) and $q \to q^{\frac{2p+1}{2}}$, $z=q^{\frac{2p-1}{2}}$ in (\ref{jtp})}). 
 \end{aligned} 
\end{equation*}

Before turning to the $\Phi_{K_p}(q)$, $p>0$ case, we note that for any given set of indices $\{i_1, i_2, \dotsc, i_n \}$, if we let $i_2 \to i_2 - i_1$, $i_3 \to i_3 - i_2$, $\dotsc$, $i_{n} \to i_n - i_{n-1}$, then 

\begin{equation} \label{sumtosum}
\sum_{k=1}^{n}\biggl(\sum_{j=1}^{k} i_{j} \biggr)\biggl(1+\sum_{j=1}^{k}i_{j}\biggr) - \frac{1}{2}\sum_{k=1}^{n}i_{k}(i_{k}+1) - \sum_{k=1}^{n}i_{k}\sum_{j=1}^{k-1}i_{j} = \sum_{k=1}^{n-1}i_{k}(i_{k}+1) + \frac{1}{2}i_{n}(i_{n}+1).
\end{equation}

For $\Phi_{K_p}(q)$ with $p>0$, it suffices to prove

\begin{equation*} \label{Kppos}
\begin{aligned}
S_{K_p}^{+} & :=\sum_{a,b,c_{1},...,c_{2p-1} \geq 0} \frac{q^{pa^{2} + (p-1)a + a\sum\limits_{n=1}^{2p-1}c_{n} + b^{2} + bc_{1} + \sum\limits_{n=1}^{2p-2}c_{n}c_{n+1} + \sum\limits_{n=1}^{2p-1}c_{n}}}{(q)_{a}(q)_{b}(q)_{b+c_{1}}\prod\limits_{n=1}^{2p-1}(q)_{c_{n}}(q)_{a+c_{n}}} \\
& = \frac{1}{(q)^{2p+1}_{\infty}} h_{2p}.
\end{aligned}
\end{equation*}

\noindent Thus, 

\begin{equation*}
\begin{aligned}
S_{K_p}^{+} & =\frac{1}{(q)_{\infty}}\sum_{a,c_{1},...,c_{2p-1} \geq 0} \frac{q^{pa^{2} + (p-1)a + a\sum\limits_{k=1}^{2p-1}c_{k} + \sum\limits_{k=1}^{2p-2}c_{k}c_{k+1} + \sum\limits_{k=1}^{2p-1}c_{k}}}{(q)_{a}\prod\limits_{k=1}^{2p-1}(q)_{c_{k}}(q)_{a+c_{k}}} \\
& (\text{evaluate the $b$-sum with (\ref{andy})}) \\
& = \frac{1}{(q)^{2}_{\infty}}\sum_{i_{1},...,i_{2p-2},c_{1},...,c_{2p-1} \geq 0} (-1)^{\sum\limits_{k=1}^{2p-2} \sum\limits_{j=1}^{k}i_{j}} \frac{q^{\frac{1}{2}\sum\limits_{k=1}^{2p-2} \bigl(\sum\limits_{j=1}^{k}i_{j} \bigr) \bigl(1+\sum\limits_{j=1}^{k}i_{j} \bigr) + \sum\limits_{k=2}^{2p-1}\sum\limits_{j=1}^{k-1}c_{k}i_{j} + \sum\limits_{k=1}^{2p-2}c_{k}c_{k+1} + \sum\limits_{k=1}^{2p-1}c_{k} }}{\prod\limits_{k=1}^{2p-2}(q)_{i_{k}}\prod\limits_{k=1}^{2p-2}(q)_{c_{k}+\sum\limits_{j=1}^{k}i_{j}}\prod\limits_{k=1}^{2p-1}(q)_{c_{k}}}\\
& (\text{apply Lemma \ref{key} to the $a$-sum with $n=2p$}) \\
& =\frac{1}{(q)^{2p+1}_{\infty}} \sum_{i_{1},...,i_{2p-2} \geq 0} (-1)^{\sum\limits_{k=1}^{2p-2}\sum\limits_{j=1}^{k}i_{j}} \frac{q^{\frac{1}{2}\sum\limits_{k=1}^{2p-2}\bigl(\sum\limits_{j=1}^{k}i_{j}\bigr)\bigl(1+\sum\limits_{j=1}^{k}i_{j}\bigr)}}{\prod\limits_{k=1}^{2p-2}(q)_{i_{k}}} \\
& (\text{evaluate the $c_{2p-1}$-sum, $c_{2p-2}$-sum, $\dotsc$ and $c_1$-sum with (\ref{e1})}) \\
& =\frac{1}{(q)^{2p+1}_{\infty}}\sum_{i_{1},...,i_{2p-2} \geq 0} (-1)^{\sum\limits_{k=1}^{p-1}i_{2k}} \frac{q^{\sum\limits_{k=1}^{p-1}\bigl(\sum\limits_{j=1}^{k}i_{2j-1}\bigr)\bigl(1+\sum\limits_{j=1}^{k}i_{2j-1}\bigr)  +  \frac{1}{2}\sum\limits_{k=1}^{p-1}i_{2k}(i_{2k} -1) - \sum\limits_{k=1}^{p-1}i_{2k}\sum\limits_{j=1}^{k}i_{2j-1}}}{\prod\limits_{k=1}^{p-1}(q)_{i_{2k-1}-i_{2k}}(q)_{i_{2k}}} \\
& (\text{shift $i_{2k-1} \to i_{2k-1} - i_{2k}$ for $k=1$, $2$, $\dotsc$, $p-1$}) \\
& =\frac{1}{(q)^{2p+1}_{\infty}}\sum_{i_{1},i_{3},...,i_{2p-3} \geq 0} (-1)^{\sum\limits_{k=1}^{p-1}i_{2k-1}} q^{\sum\limits_{k=1}^{p-1}\bigl(\sum\limits_{j=1}^{k} i_{2j-1}\bigr)\bigl(1+\sum\limits_{j=1}^{k}i_{2j-1}\bigr)  - \frac{1}{2}\sum\limits_{k=1}^{p-1}i_{2k-1}(i_{2k-1}+1) - \sum\limits_{k=1}^{p-1}i_{2k-1}\sum\limits_{j=1}^{k-1}i_{2j-1}} \\
& \times \frac{\prod\limits_{k=1}^{p-1}(q)_{\sum\limits_{j=1}^{k}i_{2j-1}}}{\prod\limits_{k=1}^{p-1}(q)_{i_{2k-1}}(q)_{\sum\limits_{j=1}^{k-1}i_{2j-1}}} \\
& (\text{evaluate the $i_2$-sum, $i_4$-sum, $\dotsc$ and $i_{2p-2}$-sum with (\ref{qbt}), then use (\ref{negab})}) \\
& = \frac{1}{(q)^{2p+1}_{\infty}}\sum_{i_{1},i_{3}...,i_{2p-3} \geq 0} (-1)^{\sum\limits_{k=1}^{p-1}i_{2k-1}} q^{\sum\limits_{k=1}^{p-1} \bigl(\sum\limits_{j=1}^{k} i_{2j-1}\bigr)\bigl(1+\sum\limits_{j=1}^{k}i_{2j-1}\bigr) - \frac{1}{2}\sum\limits_{k=1}^{p-1}i_{2k-1}(i_{2k-1}+1) - \sum\limits_{k=1}^{p-1}i_{2k-1}\sum\limits_{j=1}^{k-1}i_{2j-1}} \\
& \times \frac{(q)_{\sum\limits_{k=1}^{p-1}i_{2k-1}} }{\prod\limits_{k=1}^{p-1}(q)_{i_{2k-1}}} \quad (\text{simplify the product}) \\
\end{aligned}
\end{equation*}

\begin{equation*}
\begin{aligned}
& =\frac{1}{(q)^{2p+1}_{\infty}}\sum_{i_{1},i_{3}...,i_{2p-3} \geq 0} (-1)^{i_{2p-3}} \frac{q^{\sum\limits_{k=1}^{p-2}i_{2k-1}(1+i_{2k-1}) + \frac{1}{2}i_{2p-3}(i_{2p-3} + 1)}(q)_{i_{2p-3}}}{(q)_{i_{1}}\prod\limits_{k=2}^{p-1}(q)_{i_{2k-1}-i_{2k-3}}} \\
& (\text{let $i_3 \to i_3 - i_1$, $i_5 \to i_5 - i_3$, $\dotsc$, $i_{2p-3} \to i_{2p-3} - i_{2p-5}$, then apply (\ref{sumtosum})}) \\
&= \frac{1}{(q)^{2p+1}_{\infty}} \sum_{n \geq 0}q^{pn^{2} + (p-1)n}(1-q^{2n+1}) \quad (\text{apply (\ref{genblb3})}) \\
& = \frac{1}{(q)^{2p+1}_{\infty}} h_{2p} \quad (\text{let $n \to -n-1$ in the second sum}).
\end{aligned} 
\end{equation*}

For $\Phi_{-3_1}(q)$, it suffices to prove

\begin{equation*} \label{-31}
S_{-3_1} := \sum_{a,b,c \geq 0} \frac{q^{a + b^2 + c^2 + ab + ac}}{(q)_{a}(q)_{b}(q)_{c}(q)_{a+b}(q)_{a+c}} = \frac{1}{(q)^3_{\infty}}.
\end{equation*}

\noindent Thus, 

\begin{equation*}
\begin{aligned}
S_{-3_1} & = \frac{1}{(q)^{2}_{\infty}}\sum_{a \geq 0} \frac{q^{a }}{(q)_{a}} \quad (\text{evaluate the $b$-sum and $c$-sum with (\ref{andy})}) \\
& =\frac{1}{(q)^{3}_{\infty}} \quad (\text{evaluate the $a$-sum with (\ref{e1})}) .
\end{aligned}
\end{equation*}

For $\Phi_{-7_7}(q)$, it suffices to prove

\begin{equation*} \label{-77}
\begin{aligned}
S_{-7_7} & := \sum_{a,b,c,d,e,f,g \geq 0}(-1)^{e+f}\frac{q^{d^2 + \frac{e(3e+1)}{2} + \frac{f(3f+1)}{2} + g^2 + ab + ad  + ae + bc + be + bf + cf + cg+ a + b + c}}{(q)_{a}(q)_{b}(q)_{c}(q)_{d}(q)_{e}(q)_{f}(q)_{g}(q)_{a+d}(q)_{a+e}(q)_{b+e}(q)_{b+f}(q)_{c+f}(q)_{c+g}} \\
& = \frac{1}{(q)^{5}_{\infty}}.
\end{aligned}
\end{equation*}

Thus, 

\begin{equation*}
\begin{aligned} 
S_{-7_7} & = \frac{1}{(q)^{2}_{\infty}}\sum_{a,b,c,e,f \geq 0}(-1)^{e+f}\frac{q^{\frac{e(3e+1)}{2} + \frac{f(3f+1)}{2} + ab + ae + bc + be + bf + cf + a + b + c}}{(q)_{a}(q)_{b}(q)_{c}(q)_{e}(q)_{f}(q)_{a+e}(q)_{b+e}(q)_{b+f}(q)_{c+f}} \\
& (\text{evaluate the $d$-sum and $g$-sum with (\ref{andy})}) \\
& =\frac{1}{(q)^{4}_{\infty}}\sum_{a,b,c,e,f \geq 0} (-1)^{e+f}\frac{q^{\frac{e(e+1)}{2} + \frac{f(f+1)}{2}+ ab + bc + be + cf + a + b +c}}{(q)_{a}(q)_{b}(q)_{c}(q)_{e}(q)_{f}(q)_{a+e}(q)_{b+f}} \\
& (\text{apply Lemma \ref{key} to the $e$-sum and $f$-sum with $n=3$}) \\
& =\frac{1}{(q)^{5}_{\infty}}\sum_{a,b,e,f \geq 0} (-1)^{e+f}\frac{q^{\frac{e(e+1)}{2} + \frac{f(f+1)}{2}+ ab + be + a + b}}{(q)_{a}(q)_{b}(q)_{e}(q)_{f}(q)_{a+e}} \quad (\text{evaluate the $c$-sum with (\ref{e1})}) \\
& =\frac{1}{(q)^{7}_{\infty}}\sum_{e,f \geq 0} (-1)^{e+f}\frac{q^{\frac{e(e+1)}{2} + \frac{f(f+1)}{2}}}{(q)_{e}(q)_{f}} \quad (\text{evaluate the $b$-sum and $a$-sum with (\ref{e1})}) \\
& =\frac{1}{(q)^{5}_{\infty}} \quad (\text{evaluate the $e$-sum and $f$-sum with (\ref{e2})}).
\end{aligned}
\end{equation*}

For $\Phi_{-8_4}(q)$, it suffices to prove

\begin{equation*} \label{-84}
\begin{aligned}
S_{-8_4} & := \sum_{a,b,c,d,e,f,g,h \geq 0}(-1)^{g}\frac{q^{\frac{g(5g+3)}{2} + 2h^2 + ab + ah + bc + bh + cd + cg + ch + de + dg + ef + eg + fg + a + b + c + d + e + f + h}}{(q)_{a}(q)_{b}(q)_{c}(q)_{d}(q)_{e}(q)_{f}(q)_{g}(q)_{h}(q)_{a+h}(q)_{b+h}(q)_{c+g}(q)_{c+h}(q)_{d+g}(q)_{e+g}(q)_{f+g}} \\
& = \frac{1}{(q)^{8}_{\infty}} h_4 h_5. \end{aligned}
\end{equation*}

Thus, 

\begin{equation*} \label{-84}
\begin{aligned}
& S_{-8_4} \\
& =\frac{1}{(q)_{\infty}}\sum_{a,b,c,d,e,f,g,i,j \geq 0}(-1)^{g+j}\frac{q^{\frac{g(5g+3)}{2} + i(i+1) + \frac{j(j+1)}{2} + ij + ab + a(i+j) + bc + bi + cd + cg + de + dg + ef + eg + fg + a + b + c + d + e + f}}{(q)_{a}(q)_{b}(q)_{c}(q)_{d}(q)_{e}(q)_{f}(q)_{g}(q)_{i}(q)_{j}(q)_{b+i+j}(q)_{c+g}(q)_{c+i}(q)_{d+g}(q)_{e+g}(q)_{f+g}} \\
& (\text{apply Lemma \ref{key} to the $h$-sum with $n=4$}) \\
& =\frac{1}{(q)^{3}_{\infty}}\sum_{c,d,e,f,g,i,j \geq 0}(-1)^{g+j}\frac{q^{\frac{g(5g+3)}{2} + i(i+1) + \frac{j(j+1)}{2} + ij +  cd + cg + de + dg + ef + eg + fg + c + d + e + f}}{(q)_{c}(q)_{d}(q)_{e}(q)_{f}(q)_{g}(q)_{i}(q)_{j}(q)_{c+g}(q)_{d+g}(q)_{e+g}(q)_{f+g}} \\
& (\text{evaluate the $a$-sum and $b$-sum with (\ref{e1})}) \\
\end{aligned}
\end{equation*}

\begin{equation*}
\begin{aligned}
& =\frac{1}{(q)^{3}_{\infty}}\sum_{c,d,e,f,g,i,j \geq 0}(-1)^{g+j}\frac{q^{\frac{g(5g+3)}{2} + i(i+1) + \frac{j(j-1)}{2} - ij + cd + cg + de + dg + ef + eg + fg + c + d + e + f}}{(q)_{c}(q)_{d}(q)_{e}(q)_{f}(q)_{g}(q)_{i-j}(q)_{j}(q)_{c+g}(q)_{d+g}(q)_{e+g}(q)_{f+g}} \\
& (\text{shift $i \to i-j$}) \\
& =\frac{1}{(q)^{3}_{\infty}}\sum_{c,d,e,f,g,i \geq 0}(-1)^{g+i}\frac{q^{\frac{g(5g+3)}{2} + \frac{i(i+1)}{2} + cd + cg + de + dg + ef + eg + fg + c + d + e + f}}{(q)_{c}(q)_{d}(q)_{e}(q)_{f}(q)_{g}(q)_{c+g}(q)_{d+g}(q)_{e+g}(q)_{f+g}} \\
& (\text{evaluate the $j$-sum with (\ref{qbt}), then apply (\ref{negab})}) \\
& =\frac{1}{(q)^{4}_{\infty}}\sum_{c,d,e,f,i,r,s,t \geq 0}(-1)^{r+t+i}\frac{q^{\frac{3r(r+1)}{2} + s(s+1) + \frac{t(t+1)}{2} + 2rs + rt + st}}{(q)_{c}(q)_{d}(q)_{e}(q)_{f}(q)_{r}(q)_{s}(q)_{t}(q)_{c+r}(q)_{d+r+s}(q)_{e+r+s+t}} \\
& \times q^{\frac{i(i+1)}{2} + cd + de + dr + ef + e(r+s) + f(r+s+t) + c + d + e + f} \\
& (\text{apply Lemma \ref{key} to the $g$-sum with $n=5$}) \\
& =\frac{1}{(q)^{8}_{\infty}}\sum_{i,r,s,t \geq 0}(-1)^{r+t+i}\frac{q^{\frac{3r(r+1)}{2} + s(s+1) + \frac{t(t+1)}{2} + 2rs + rt + st + \frac{i(i+1)}{2}}}{(q)_{r}(q)_{s}(q)_{t}} \\
& (\text{evaluate the $f$-sum, $e$-sum, $d$-sum and $c$-sum with (\ref{e1})}) \\
& =\frac{1}{(q)^{8}_{\infty}}\sum_{i,r,s,t \geq 0}(-1)^{r+t+i}\frac{q^{\frac{r(r+1)}{2} + s(s+1) + \frac{t(t+1)}{2} + st + \frac{i(i+1)}{2}}}{(q)_{r}(q)_{s-r}(q)_{t}} \quad (\text{shift $s \to s-r$}) \\
& =\frac{1}{(q)^{8}_{\infty}}\sum_{i,s,t \geq 0}(-1)^{t+i}\frac{q^{s(s+1) + \frac{t(t+1)}{2} + st + \frac{i(i+1)}{2}}}{(q)_{t}} \quad (\text{evaluate the $r$-sum with (\ref{qbt})}) \\
& =\frac{1}{(q)^{7}_{\infty}}\sum_{i \geq 0}(-1)^{i}q^{\frac{i(i+1)}{2}}\sum_{s \geq 0}\frac{q^{s(s+1)}}{(q)_{s}} \\
& (\text{evaluate the $t$-sum with (\ref{e2}), then simplify}) \\
&  = \frac{1}{(q)^{8}_{\infty}} h_4 h_5 \\
& (\text{by (\ref{rr}), $q \to q^{5/2}$, $z=-q^{3/2}$ in (\ref{jtp}) and the proof of (\ref{52})}).
\end{aligned}
\end{equation*}

\end{proof}

\section*{Acknowledgements} The second author would like to thank Stavros Garoufalidis for his talk on June 23, 2011 at the Institut Math{\'e}matiques de Jussieu and the subsequent correspondence, the organizers (in particular, Ling Long and Holly Swisher) of the conference ``Applications of Automorphic Forms in Number Theory and Combinatorics", April 12--15, 2014 at LSU, Oliver Dasbach and Mustafa Hajij for their helpful comments and suggestions and George Andrews for his continued interest and encouragement. He would also like to thank the organizers (in particular, Adam Sikora) of the workshop ``Low-dimensional topology and number theory", August 17-23, 2014 at Oberwolfach for the opportunity to present this work. Finally, both authors are very grateful to the referee for their extremely careful reading of our paper.


\begin{thebibliography}{999}

\bibitem{andrews74}
G.E. Andrews, \emph{An analytic generalization of the Rogers-Ramanujan identities for odd moduli}, Proc. Nat. Acad. Sci. U.S.A \textbf{71} (1974), 4082--4085.

\bibitem{An2} 
G.E. Andrews, \emph{$q$-Series: Their Development and Application in Analysis, Number
Theory, Combinatorics, Physics, and Computer Algebra}, volume 66 of Regional
Conference Series in Mathematics. American Mathematical Society, Providence, RI,
1986.

\bibitem{An3}
G.E. Andrews, \emph{Bailey's transform, lemma, chains and tree}, Special functions 2000: current perspective and future directions (Tempe, AZ), 1--22, NATO Sci. Ser. II Math. Phys. Chem., {\bf 30}, Kluwer Acad. Publ., Dordrecht, 2001. 

\bibitem{and}
G.E. Andrews, \emph{Knots and $q$-series}, Ramanujan 125, 17--24, Contemp. Math., 627, Amer. Math. Soc., Providence, RI, 2014.

\bibitem{ab}
G.E. Andrews, D. Bowman, \emph{The Bailey transform and D. B. Sears}, Quaest. Math. \textbf{22} (1999), no. 1, 19--26.

\bibitem{ad}
C. Armond, O. Dasbach, \emph{Rogers-Ramanujan type identities and the head and tail of the colored Jones polynomial}, preprint available at \url{http://arxiv.org/abs/1106.3948}

\bibitem{bhl}
K. Bringmann, K. Hikami, J. Lovejoy, \emph{On the modularity of the unified WRT invariants of certain Seifert manifolds}, Adv. in Appl. Math. \textbf{46} (2011), no. 1-4, 86--93.

\bibitem{gar1}
S. Garoufalidis, \emph{Quantum knot invariants}, preprint available at \url{http://arxiv.org/abs/1201.3314}

\bibitem{gk}
S. Garoufalidis, R. Kashaev, \emph{From state integrals to $q$-series}, Math. Res. Lett., to appear.

\bibitem{gl}
S. Garoufalidis, T.  L{\^e}, \emph{Nahm sums, stability and the colored Jones polynomial}, Research in the Mathematical Sciences, to appear.

\bibitem{gt}
S. Garoufalidis, T. Vuong, \emph{Alternating knots, planar graphs and $q$-series}, Ramanujan J., to appear.

\bibitem{gr}
G. Gasper, M. Rahman, \emph{Basic hypergeometric series}, Encyclopedia of Mathematics and its Applications, \textbf{96}. Cambridge University Press, Cambridge, 2004.

\bibitem{mh}
M. Hajij, \emph{The tail of a quantum spin network}, preprint available at \url{http://arxiv.org/abs/1308.2369}

\bibitem{hik}
K. Hikami, \emph{Volume conjecture and asymptotic expansion of $q$-series}, Experiment. Math. \textbf{12} (2003), no. 3, 319--337.

\bibitem{hik1}
K. Hikami, \emph{Hecke type formula for unified Witten-Reshetikhin-Turaev invariants as higher-order mock theta functions}, Int. Math. Res. Not. IMRN 2007, no. 7, Art. ID rnm 022, 32pp.

\bibitem{lz}
R. Lawrence, D. Zagier, \emph{Modular forms and quantum invariants of $3$-manifolds}, Asian J. Math. \textbf{3} (1999), no. 1, 93--107.

\bibitem{Sl1}
L.J. Slater, \emph{A new proof of Rogers's transformations of infinite series}, Proc. London
Math. Soc. (2) {\bf 53} (1951) 460--475.

\bibitem{war}
S. O. Warnaar, \emph{50 years of Bailey's lemma}, Algebraic combinatorics and applications (G{\"o}{\ss}weinstein, 1999), 333--347, Springer, Berlin, 2001.

\end{thebibliography}
\end{document}